\numberwithin{equation}{section}
\begin{document}
\title{Parametric Marcinkiewicz integrals on weighted Hardy and weak Hardy spaces}
\author{Hua Wang \footnote{E-mail address: wanghua@pku.edu.cn.}\\
\footnotesize{Department of Mathematics, Zhejiang University, Hangzhou 310027, P. R. China}}
\date{}
\maketitle

\begin{abstract}
Let $0<\rho<n$ and $\mu^{\rho}_{\Omega}$ be the parametric Marcinkiewicz integral. In this paper, by using the atomic decomposition theory of weighted Hardy and weak Hardy spaces, we will obtain the boundedness properties of $\mu^{\rho}_\Omega$ on these spaces, under the Lipschitz condition imposed on the kernel $\Omega$.\\
MSC(2010): 42B25; 42B30\\
Keywords: Parametric Marcinkiewicz integrals; weighted Hardy spaces; weighted weak Hardy spaces; Lipschitz condition; $A_p$ weights; atomic decomposition
\end{abstract}

\section{Introduction}

Suppose that $S^{n-1}$ is the unit sphere in $\mathbb R^n$($n\ge2$) equipped with the normalized Lebesgue measure $d\sigma$. Let $\Omega$ be a homogeneous function of degree zero on $\mathbb R^n$ satisfying $\Omega\in L^1(S^{n-1})$ and
\begin{equation}
\int_{S^{n-1}}\Omega(x')\,d\sigma(x')=0,
\end{equation}
where $x'=x/{|x|}$ for any $x\neq0$. For $0<\rho<n$, in 1960, H\"ormander \cite{hor} defined the parametric Marcinkiewicz integral operator $\mu^{\rho}_{\Omega}$ of higher dimension as follows.
\begin{equation}
\mu^{\rho}_{\Omega}(f)(x)=\left(\int_0^\infty\big|F^{\rho}_{\Omega,t}(x)\big|^2\frac{dt}{t^{2\rho+1}}\right)^{1/2},
\end{equation}
where
\begin{equation}
F^{\rho}_{\Omega,t}(x)=\int_{|x-y|\le t}\frac{\Omega(x-y)}{|x-y|^{n-\rho}}f(y)\,dy.
\end{equation}
When $\rho=1$, we shall denote $\mu^1_\Omega$ simply by $\mu_\Omega$. This operator $\mu_\Omega$ was first introduced by Stein in \cite{stein1}. He proved that if $\Omega\in Lip_\alpha(S^{n-1})$ ($0<\alpha\le1$), then $\mu_\Omega$ is the operator of strong type $(p,p)$ for $1<p\le2$ and of weak type $(1,1)$. Here, we say that $\Omega\in Lip_\alpha(S^{n-1})$ if
\begin{equation}
\big|\Omega(x')-\Omega(y')\big|\le C|x'-y'|^\alpha, \quad x',y'\in S^{n-1}.
\end{equation}
In \cite{benedek}, Benedek, Calder\'on and Panzone showed that if $\Omega$ is continuously differentiable on $S^{n-1}$, then $\mu_\Omega$ is of strong type $(p,p)$ for all $1<p<\infty$. In 1990, Torchinsky and Wang \cite{torchinsky} considered the weighted case and proved that if $\Omega\in Lip_\alpha(S^{n-1})$, $0<\alpha\le1$, then for all $1<p<\infty$ and $w\in A_p$ (Muckenhoupt weight class), $\mu_\Omega$ is bounded on $L^p_w(\mathbb R^n)$. On the other hand, in 1960, H\"ormander \cite{hor} showed that if $\Omega\in Lip_\alpha(S^{n-1})$ ($0<\alpha\le1$), then for $0<\rho<n$, $\mu^{\rho}_\Omega$ is of strong type $(p,p)$ for all $1<p<\infty$. It is well known that the Littlewood-Paley $g$-function is a very important tool in harmonic analysis and the parametric Marcinkiewicz integral is essentially a Littlewood-Paley $g$-function. Therefore, many authors have been interested in studying the boundedness properties of $\mu^{\rho}_\Omega$ on various function spaces, one can see \cite{aliev,ding5,fang,yabuta} and the references therein for further details.

In \cite{sato1}, Sato established the following weighted $L^p$ boundedness of $\mu^{\rho}_\Omega$ for all $0<\rho<n$ (see also \cite{shi}).

\newtheorem*{thma}{Theorem A}

\begin{thma}
Let $0<\rho<n$ and $\Omega\in L^\infty(S^{n-1})$. If $w\in A_p$, $1<p<\infty$, then there exists a constant $C>0$ independent of $f$ such that
\begin{equation*}
\big\|\mu^{\rho}_{\Omega}(f)\big\|_{L^p_w}\le C\|f\|_{L^p_w}.
\end{equation*}
\end{thma}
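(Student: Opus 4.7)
The natural strategy is to realize $\mu^{\rho}_{\Omega}$ as an $\mathcal{H}$-valued convolution operator with $\mathcal{H}=L^{2}\bigl((0,\infty),\,dt/t^{2\rho+1}\bigr)$. Writing $K_{t}(x)=\chi_{\{|x|\le t\}}\Omega(x)/|x|^{n-\rho}$, one has $F^{\rho}_{\Omega,t}=K_{t}\ast f$ and $\mu^{\rho}_{\Omega}(f)(x)=\|K_{t}\ast f(x)\|_{\mathcal{H}}$. The vector-valued weighted Calder\'on--Zygmund theory then produces $L^{p}_{w}$ bounds for every $1<p<\infty$ and $w\in A_{p}$, provided that (i)~the unweighted $L^{2}$ bound and (ii)~size and smoothness estimates on the $\mathcal{H}$-valued kernel $\vec{K}(x)=\{K_{t}(x)\}_{t>0}$ are established.

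The size estimate is immediate: a direct computation gives
\[
\|\vec{K}(x)\|_{\mathcal{H}}^{2}=\frac{|\Omega(x/|x|)|^{2}}{|x|^{2(n-\rho)}}\int_{|x|}^{\infty}\frac{dt}{t^{2\rho+1}}=\frac{|\Omega(x/|x|)|^{2}}{2\rho\,|x|^{2n}},
\]
so $\|\vec{K}(x)\|_{\mathcal{H}}\le C\|\Omega\|_{L^{\infty}(S^{n-1})}|x|^{-n}$. The unweighted $L^{2}$ bound, going back to H\"ormander~\cite{hor}, follows from Plancherel in $x$: interchanging the order of integration reduces matters to showing $\sup_{\xi}\int_{0}^{\infty}|\widehat{K_{t}}(\xi)|^{2}\,dt/t^{2\rho+1}<\infty$, which is elementary using the cancellation~(1.1) and $\Omega\in L^{\infty}(S^{n-1})$.

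The main obstacle is the $\mathcal{H}$-valued smoothness (H\"ormander) condition
\[
\int_{|x|>2|y|}\|\vec{K}(x-y)-\vec{K}(x)\|_{\mathcal{H}}\,dx\le C,
\]
since with $\Omega$ merely bounded there is no pointwise angular regularity to exploit. To handle this I would follow the Duoandikoetxea--Rubio de Francia approach: decompose $\mu^{\rho}_{\Omega}$ along a smooth dyadic partition of unity in the Fourier variable, $\mu^{\rho}_{\Omega}=\sum_{k\in\mathbb{Z}}T_{k}$, where the $k$-th piece is essentially frequency-localized at scale $2^{-k}$. Plancherel together with the cancellation of $\Omega$ yields an $L^{2}$ bound on $T_{k}$ with a factor $2^{-\delta|k|}$ for some $\delta>0$, while a crude convolution estimate gives a weighted $L^{p}_{w}$ bound with only polynomial growth in $|k|$. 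Interpolating between these and summing produces an $L^{2}_{w}$ bound for $w\in A_{2}$, and Rubio de Francia's extrapolation theorem then delivers the full range $L^{p}_{w}$, $1<p<\infty$, $w\in A_{p}$, as claimed.
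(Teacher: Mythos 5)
First, a point of order: the paper never proves Theorem A. It is quoted from Sato \cite{sato1} (see also \cite{shi}) and used as a black box in Sections 3--5, so there is no in-paper argument to compare yours against; the only meaningful comparison is with the cited source. On that basis, your plan is correct and is, in substance, the known proof: you rightly observe that the $\mathcal{H}$-valued kernel has the right size (your computation of $\|\vec{K}(x)\|_{\mathcal{H}}$ is exact) and the right $L^2$ theory, but that the $\mathcal{H}$-valued H\"ormander condition is inaccessible when $\Omega$ is merely bounded, so that vector-valued Calder\'on--Zygmund theory must be replaced by the Fourier-transform scheme of Duoandikoetxea--Rubio de Francia (frequency decomposition, $L^2$ bounds with geometric decay, uniform weighted bounds, interpolation with change of weights, extrapolation). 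This is exactly the route taken in \cite{sato1}.

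Three details of your sketch need tightening when written out. (i) The pieces $T_k$ are not frequency-localized at a single scale $2^{-k}$: with the paper's notation $\psi^{\rho}_t$ (so that $\mu^{\rho}_{\Omega}(f)^2=\int_0^\infty|\psi^{\rho}_t*f|^2\,dt/t$, and note $\psi^{\rho}_t=t^{-\rho}K_t$) and a Littlewood--Paley partition $I=\sum_m\Lambda_m$, the correct decomposition is $T_kf=\bigl(\sum_j\int_{2^j}^{2^{j+1}}|\psi^{\rho}_t*\Lambda_{j+k}f|^2\,dt/t\bigr)^{1/2}$; only the scale of $t$ relative to the frequency band is fixed, and each $T_k$ is itself a square function involving all frequencies. (ii) The decay $2^{-\delta|k|}$ in $L^2$ uses the cancellation (1.1) only in the range where $t|\xi|\lesssim1$ on the supports involved; in the opposite range you need the oscillatory decay $|\widehat{\psi^{\rho}}(\xi)|\le C|\xi|^{-\gamma}$, which is the same van der Corput-type computation you invoke for the unweighted $L^2$ bound, so no new idea is required, but it is not ``cancellation'' alone. (iii) The uniform weighted bound on $T_k$ is not a crude convolution estimate: it follows from the pointwise domination $\sup_{t>0}|\psi^{\rho}_t|*|g|\le C\,Mg$ combined with the weighted Fefferman--Stein vector-valued maximal inequality and weighted Littlewood--Paley theory (both valid for $A_p$ weights), and the interpolation must be Stein--Weiss interpolation with change of weights, using the openness of $A_2$ (for $w\in A_2$ there is $\varepsilon>0$ with $w^{1+\varepsilon}\in A_2$). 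With these corrections your outline closes: summation in $k$ gives the $L^2_w$ bound for all $w\in A_2$, and Rubio de Francia extrapolation yields the full range $1<p<\infty$, $w\in A_p$.
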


The main purpose of this paper is to discuss the boundedness properties of parametric Marcinkiewicz integrals $\mu^{\rho}_\Omega$ ($0<\rho<n$) on the weighted Hardy and weak Hardy spaces (see Section 2 for the definitions). We now present our main results as follows.

\newtheorem{theorem}{Theorem}[section]

\begin{theorem}
Let $0<\rho<n$, $0<\alpha\le1$ and $\Omega\in Lip_\alpha(S^{n-1})$. If $n/{(n+\alpha)}<p\le1$ and $w\in A_{p(1+\frac{\alpha}{n})}$, then there exists a
constant $C>0$ independent of $f$ such that
\begin{equation*}
\big\|\mu^{\rho}_{\Omega}(f)\big\|_{L^p_w}\le C\|f\|_{H^p_w}.
\end{equation*}
\end{theorem}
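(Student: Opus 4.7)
The plan is to invoke the atomic decomposition of the weighted Hardy space $H^p_w$ and reduce matters to a uniform bound on a single atom. Writing $f=\sum_j\lambda_j a_j$ with each $a_j$ a weighted $(p,\infty)$-atom supported in $B_j=B(x_j,r_j)$, satisfying $\|a_j\|_{L^\infty}\le w(B_j)^{-1/p}$ and $\int a_j=0$, the sublinearity of $\mu^\rho_\Omega$ combined with $0<p\le 1$ yields
\[
\|\mu^\rho_\Omega(f)\|_{L^p_w}^p\le\sum_j|\lambda_j|^p\|\mu^\rho_\Omega(a_j)\|_{L^p_w}^p,
\]
so it is enough to show $\|\mu^\rho_\Omega(a)\|_{L^p_w}\le C$ uniformly for every such atom $a$ supported in $B=B(x_0,r)$.

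I would split the $L^p_w$-integral into the local part on $2B$ and the far part on $(2B)^c$. For the local part, pick $p_0>\max(1,p(1+\alpha/n))$ so that $w\in A_{p_0}$ (using the inclusion $A_{q_1}\subset A_{q_2}$ for $q_1\le q_2$); then Hölder's inequality with exponent $p_0/p$, Theorem A for the $L^{p_0}_w$-bound, the atomic size condition $\|a\|_{L^\infty}\le w(B)^{-1/p}$, and the doubling of $A_{p_0}$-weights give
\[
\int_{2B}|\mu^\rho_\Omega(a)|^p w\,dx\lesssim\|\mu^\rho_\Omega(a)\|_{L^{p_0}_w}^p\,w(2B)^{1-p/p_0}\lesssim\|a\|_{L^\infty}^p\,w(B)\le 1.
\]

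The substantive step is the far part, which rests on the pointwise estimate
\[
\mu^\rho_\Omega(a)(x)\lesssim\|a\|_{L^\infty}\frac{r^{n+\alpha}}{|x-x_0|^{n+\alpha}},\qquad x\notin 2B.
\]
Setting $R=|x-x_0|>2r$, first observe that $F^\rho_{\Omega,t}(x)=0$ whenever $t<R-r$, since every $y\in B$ then satisfies $|x-y|\ge R-r>t$. For $t\ge R+r$ the whole support of $a$ lies inside the truncation, so one exploits the cancellation $\int a=0$ together with the Lipschitz kernel estimate
\[
\left|\frac{\Omega(x-y)}{|x-y|^{n-\rho}}-\frac{\Omega(x-x_0)}{|x-x_0|^{n-\rho}}\right|\lesssim\frac{r^\alpha}{R^{n-\rho+\alpha}},\qquad y\in B,
\]
obtained from $\Omega\in Lip_\alpha(S^{n-1})$ and the standard mean-value bound on the radial factor; this gives $|F^\rho_{\Omega,t}(x)|\lesssim\|a\|_{L^\infty}\,r^{n+\alpha}/R^{n-\rho+\alpha}$. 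For the transitional range $R-r\le t\le R+r$, which has length $2r$, a direct size estimate handles $F^\rho_{\Omega,t}(x)$. Plugging both bounds into $\int_0^\infty|F^\rho_{\Omega,t}(x)|^2\,t^{-2\rho-1}\,dt$ and taking the square root produces the displayed pointwise bound.

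Decomposing $(2B)^c=\bigcup_{k\ge 1}(2^{k+1}B\setminus 2^k B)$, the pointwise estimate integrates to
\[
\int_{(2B)^c}|\mu^\rho_\Omega(a)|^p w\,dx\lesssim\|a\|_{L^\infty}^p\sum_{k\ge 1}2^{-kp(n+\alpha)}w(2^{k+1}B).
\]
The crucial and most delicate point — the main obstacle — is matching the decay exponent $p(n+\alpha)$ to the growth of $w$ on dilates of $B$. Since $w\in A_{p(1+\alpha/n)}$ and the $A_q$ classes are open in $q$, there exists $\varepsilon>0$ with $w\in A_{p(1+\alpha/n)-\varepsilon}$, so that $w(2^{k+1}B)\lesssim 2^{k(p(n+\alpha)-n\varepsilon)}w(B)$, and the geometric series converges to $\lesssim w(B)$. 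Combined with $\|a\|_{L^\infty}^p\le w(B)^{-1}$, the far part is bounded by a constant, which completes the reduction.
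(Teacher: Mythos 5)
Your overall strategy is the same as the paper's: reduce via atomic decomposition to a uniform bound on a single atom, treat the local piece by H\"older's inequality, Theorem A and doubling, and treat the far piece by a pointwise decay estimate summed against the weight (your dyadic annuli plus openness of the $A_q$ classes is equivalent to the paper's use of Lemma 2.2). Those steps are fine. The genuine gap is in the transitional range $R-r\le t\le R+r$, the one piece you correctly isolated but then dismissed too quickly. There the sphere $\{y:|x-y|=t\}$ cuts through the support of $a$, so the cancellation $\int a=0$ is unavailable, and the direct size estimate gives only $\big|F^{\rho}_{\Omega,t}(x)\big|\lesssim\|a\|_{L^\infty}r^nR^{\rho-n}$; since this range has length $\sim r$ and sits at $t\sim R$, it contributes
\begin{equation*}
\left(\int_{R-r}^{R+r}\big|F^{\rho}_{\Omega,t}(x)\big|^2\,\frac{dt}{t^{2\rho+1}}\right)^{1/2}\lesssim\|a\|_{L^\infty}\Big(\frac{r}{R}\Big)^{n+\frac12},
\end{equation*}
and no better: the exponent $n+\frac12$ is actually attained, e.g.\ for the atom $a=w(Q)^{-1/p}\big(\chi_{Q\cap\{y_1>0\}}-\chi_{Q\cap\{y_1<0\}}\big)$ and $x$ far away in the $e_1$ direction, since for an interval of $t$'s of length $\sim r$ the truncation captures essentially only the half of $a$ of one sign. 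Hence your claimed pointwise bound $\mu^{\rho}_{\Omega}(a)(x)\lesssim\|a\|_{L^\infty}(r/R)^{n+\alpha}$ is justified only when $\alpha\le\frac12$; for $\alpha>\frac12$ the true decay is $(r/R)^{n+\frac12}$.

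This matters exactly in the regime the theorem aims at. With decay $n+\frac12$, your far-field sum becomes $\sum_{k\ge1}2^{-kp(n+\frac12)}w(2^{k+1}B)$, while openness of $A_q$ only yields $w(2^{k+1}B)\lesssim 2^{k(p(n+\alpha)-n\varepsilon)}w(B)$ for some possibly tiny $\varepsilon>0$; convergence then requires $p(\alpha-\frac12)<n\varepsilon$, which is not guaranteed when $\alpha>\frac12$ and $w$ lies near the edge of $A_{p(1+\frac{\alpha}{n})}$. So, as written, your argument proves the statement only for $0<\alpha\le\frac12$, or under an extra restriction of the form $nq_w<p\big(n+\frac12\big)$ (in the unweighted case, $p>2n/(2n+1)$). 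For what it is worth, the paper's own proof is no better at this point: it applies the vanishing-moment identity for every $t\ge|x-x_0|/2$, which is legitimate only when the ball $\{|x-y|\le t\}$ contains all of $Q$, i.e.\ it silently ignores precisely this transitional range, so its pointwise estimate (3.7) suffers the same defect for $\alpha>\frac12$. You deserve credit for noticing that the range exists; the error is the unverified assertion that a size estimate there still ``produces the displayed pointwise bound.''
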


\begin{theorem}
Let $0<\rho<n$, $0<\alpha<1$ and $\Omega\in Lip_\alpha(S^{n-1})$. If $p=n/{(n+\alpha)}$ and $w\in A_1$, then there exists a
constant $C>0$ independent of $f$ such that
\begin{equation*}
\big\|\mu^{\rho}_{\Omega}(f)\big\|_{WL^p_w}\le C\|f\|_{H^p_w}.
\end{equation*}
\end{theorem}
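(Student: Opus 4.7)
The plan is to establish the weak-type estimate
$$\sigma^{p}\,w\bigl(\{x\in\mathbb{R}^{n}:\mu^{\rho}_{\Omega}f(x)>\sigma\}\bigr)\le C\|f\|_{H^{p}_{w}}^{p}\qquad(\sigma>0)$$
via the weighted atomic decomposition of $H^{p}_{w}$ combined with a level-set splitting tailored to the critical index. Because $p=n/(n+\alpha)$ with $0<\alpha<1$ yields $\lfloor n(1/p-1)\rfloor=0$, one writes $f=\sum_{j}\lambda_{j}a_{j}$, where each $a_{j}$ is a weighted $(p,\infty,0)$-atom supported in a ball $B_{j}=B(x_{j},r_{j})$, satisfying $\|a_{j}\|_{\infty}\le w(B_{j})^{-1/p}$ and $\int a_{j}\,dx=0$, with $\sum_{j}|\lambda_{j}|^{p}\le C\|f\|_{H^{p}_{w}}^{p}$.

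The analytic core of the proof is the off-diagonal pointwise estimate
$$\mu^{\rho}_{\Omega}a_{j}(x)\le C\,\frac{r_{j}^{n+\alpha}}{|x-x_{j}|^{n+\alpha}}\,w(B_{j})^{-1/p}\qquad(x\notin 2B_{j}).$$
To derive this, I would split the $t$-integral in the definition of $\mu^{\rho}_{\Omega}$ at $t\sim|x-x_{j}|$: when $t$ is small, the support condition $|x-y|\le t$ is incompatible with $y\in B_{j}$ and the contribution vanishes; when $t$ is large, the vanishing moment $\int a_{j}=0$ lets one replace the kernel by its $y$-difference at $y=x_{j}$, and the Lipschitz condition $\Omega\in\mathrm{Lip}_{\alpha}(S^{n-1})$ produces the gain $(r_{j}/|x-x_{j}|)^{\alpha}$. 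The decay exponent $n+\alpha$ is exactly the one matching the critical $p$.

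For the weak-type summation, fix $\sigma>0$ and partition the indices into $I_{1}=\{j:|\lambda_{j}|\le\sigma\,w(B_{j})^{1/p}\}$ and $I_{2}$ its complement, writing $f=g_{\sigma}+b_{\sigma}$. For the ``bad'' part $b_{\sigma}$, the union $\bigcup_{j\in I_{2}}2B_{j}$ has $w$-measure bounded, via $A_{1}$-doubling together with $w(B_{j})<(|\lambda_{j}|/\sigma)^{p}$ on $I_{2}$, by $C\sigma^{-p}\sum_{j\in I_{2}}|\lambda_{j}|^{p}$; the residual contribution of $\mu^{\rho}_{\Omega}b_{\sigma}$ off this union is controlled by combining the pointwise estimate with the convergent tail bound $\int_{(2B_{j})^{c}}|x-x_{j}|^{-(n+\alpha)}w(x)\,dx\le Cw(B_{j})\,r_{j}^{-(n+\alpha)}$ (finite precisely because $\alpha>0$ and $w\in A_{1}$) and Chebyshev's inequality. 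For the ``good'' part $g_{\sigma}$, Theorem~A (valid as $w\in A_{1}\subset A_{q}$ for any $q>1$) plus Chebyshev at exponent $q$ reduces matters to showing $\|g_{\sigma}\|_{L^{q}_{w}}^{q}\le C\sigma^{q-p}\sum_{j\in I_{1}}|\lambda_{j}|^{p}$; this follows by interpolating $\|g_{\sigma}\|_{\infty}\le C\sigma$ (from the bounded-overlap structure of the atomic decomposition combined with $\|\lambda_{j}a_{j}\|_{\infty}\le\sigma$ on $I_{1}$) against the pointwise $p$-subadditivity bound $\|g_{\sigma}\|_{L^{p}_{w}}^{p}\le\sum_{j\in I_{1}}|\lambda_{j}|^{p}$.

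The principal obstacle is the endpoint obstruction at $p=n/(n+\alpha)$. The $p$-th power of the pointwise decay $|x-x_{j}|^{-(n+\alpha)}$ is exactly $|x-x_{j}|^{-n}$, whose integral against $w\in A_{1}$ diverges logarithmically; consequently the atom-by-atom strong $L^{p}_{w}$-bound on $\mu^{\rho}_{\Omega}a_{j}$ that underlies Theorem~1 fails, and one is forced to replace it by the level-set argument above. The most delicate point is balancing the two halves of the splitting so that each contributes at most a constant multiple of $\sum|\lambda_{j}|^{p}$; the $b_{\sigma}$-estimate depends critically on the improved tail integrability $\alpha>0$ inherited from the Lipschitz regularity of $\Omega$, and the $g_{\sigma}$-estimate depends on combining $L^{\infty}$- and $L^{p}_{w}$-controls through the $A_{1}$ hypothesis on $w$.
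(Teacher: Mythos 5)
Your per-atom analysis is essentially the paper's: the off-diagonal bound $\mu^{\rho}_{\Omega}a_{j}(x)\le C\,r_{j}^{n+\alpha}|x-x_{j}|^{-(n+\alpha)}w(B_{j})^{-1/p}$ for $x\notin 2B_{j}$ is the paper's estimate (3.7) (your single term dominates the paper's second term $r^{n+1}|x-x_0|^{-(n+1)}$ since $|x-x_{j}|\ge r_{j}$ there), and you correctly identify the endpoint obstruction that $(n+\alpha)p=n$ kills the atom-by-atom strong $L^{p}_{w}$ bound. The genuine gap is in your summation mechanism, specifically the bad part. Off the union $\bigcup_{j\in I_{2}}2B_{j}$, your proposed control of $\mu^{\rho}_{\Omega}b_{\sigma}$ by ``pointwise estimate $+$ tail bound $+$ Chebyshev'' cannot close. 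The tail bound gives $\int_{(2B_{j})^{c}}\mu^{\rho}_{\Omega}a_{j}\,w\,dx\le C\,w(B_{j})^{1-1/p}$, so Chebyshev at exponent $1$ yields
\begin{equation*}
w\Big(\Big\{x\notin\bigcup_{j\in I_2}2B_j:\ \mu^{\rho}_{\Omega}b_{\sigma}(x)>\sigma/2\Big\}\Big)\le \frac{C}{\sigma}\sum_{j\in I_{2}}|\lambda_{j}|\,w(B_{j})^{1-1/p}.
\end{equation*}
Writing $|\lambda_{j}|=u_{j}\,\sigma\,w(B_{j})^{1/p}$ with $u_{j}>1$ on $I_{2}$, each term equals $C\,u_{j}w(B_{j})$, while the target $\sigma^{-p}|\lambda_{j}|^{p}$ equals $u_{j}^{p}w(B_{j})$; since $p<1$ the discrepancy factor $u_{j}^{1-p}$ is unbounded, so the estimate fails exactly on $I_{2}$, i.e.\ for the atoms with large coefficients that this half of the splitting was supposed to handle. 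Chebyshev at exponent $p$ fails because of the divergence you yourself point out, and any intermediate exponent $s\in(p,1]$ produces $w(B_{j})^{1-s/p}$ with a negative exponent, which the one-sided information $w(B_{j})<(|\lambda_{j}|/\sigma)^{p}$ cannot control. In short, no strong-type (integral) bound on the tails converts into the required level-set bound; what is needed per atom is a weak-type bound, and weak-type bounds cannot be summed by Chebyshev alone.

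This is precisely where the paper's route differs. First, for a single atom it proves $\|\mu^{\rho}_{\Omega}a\|_{WL^{p}_{w}}\le C$ by a direct level-set computation: the set $\{x\in(Q^{*})^{c}: G(x)>\lambda\}$, $G(x)=r^{n+\alpha}|x-x_{0}|^{-(n+\alpha)}w(Q)^{-1/p}$, is either empty or contained in $Q(x_{0},2R)$ with $R=r\,\lambda^{-p/n}w(Q)^{-1/n}$, and Lemma 2.3 (this is where $w\in A_{1}$ enters) gives $w(Q(x_{0},2R))\le C\lambda^{-p}$. Second, the atom-wise weak bounds are assembled through the superposition principle (Lemma 4.1, of Stein--Taibleson--Weiss type): if $w(\{|f_{j}|>\alpha\})\le\alpha^{-p}$ uniformly, $0<p<1$, and $\sum_{j}|\lambda_{j}|^{p}\le1$, then $\sum_{j}\lambda_{j}f_{j}\in WL^{p}_{w}$. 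Its proof is a truncation/layer-cake argument, not plain Chebyshev, and it is the missing ingredient in your outline; with it, your $I_{1}/I_{2}$ splitting becomes unnecessary. A secondary gap: your bound $\|g_{\sigma}\|_{\infty}\le C\sigma$ invokes ``bounded overlap'' of the atomic decomposition, but Theorem 2.4 (Garcia-Cuerva) provides no overlap control--arbitrarily many atoms of $I_{1}$ may share a support, so $\|g_{\sigma}\|_{\infty}$ need not be finite. Bounded overlap is available only level-by-level in a Calder\'on--Zygmund-type construction (as in Theorem 2.5, which the paper uses for Theorem 1.3, not here), and one would also need geometric separation of the levels' heights to sum them. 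In the paper's argument neither issue arises, since everything is reduced to one atom at a time.
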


\begin{theorem}
Let $0<\rho<n$, $0<\alpha\le1$ and $\Omega\in Lip_\alpha(S^{n-1})$. If $n/{(n+\alpha)}<p\le1$ and $w\in A_{p(1+\frac{\alpha}{n})}$, then there exists a
constant $C>0$ independent of $f$ such that
\begin{equation*}
\big\|\mu^{\rho}_{\Omega}(f)\big\|_{WL^p_w}\le C\|f\|_{WH^p_w}.
\end{equation*}
\end{theorem}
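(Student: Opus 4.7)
The plan is to parallel the proof of Theorem~1 but replace the $H^p_w$-atomic decomposition by the atomic decomposition of the weighted weak Hardy space $WH^p_w$, which has a two-parameter index and bounded overlap within each level rather than a single index. Specifically, any $f\in WH^p_w$ admits a representation $f=\sum_{k\in\mathbb Z}\sum_{i}\lambda_{k,i}a_{k,i}$, where each $a_{k,i}$ is a weighted $(p,\infty)$-atom with $\int a_{k,i}=0$ supported on a cube $Q_{k,i}$, $|\lambda_{k,i}|\le C\,2^{k}w(Q_{k,i})^{1/p}$, and for every fixed $k$ one has $\sum_i\chi_{Q_{k,i}}\le C$ together with $\sum_i w(Q_{k,i})\le C\,2^{-kp}\|f\|_{WH^p_w}^{p}$. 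Given $\lambda>0$, pick $k_0\in\mathbb Z$ with $2^{k_0}\le\lambda<2^{k_0+1}$ and split $f=F_1+F_2$, where $F_1=\sum_{k<k_0}\sum_i\lambda_{k,i}a_{k,i}$ and $F_2=\sum_{k\ge k_0}\sum_i\lambda_{k,i}a_{k,i}$. Then
\begin{equation*}
w\bigl(\{x:\mu^{\rho}_\Omega f(x)>\lambda\}\bigr)\le w\bigl(\{\mu^{\rho}_\Omega F_1>\lambda/2\}\bigr)+w\bigl(\{\mu^{\rho}_\Omega F_2>\lambda/2\}\bigr),
\end{equation*}
and it suffices to bound each piece by $C\,\lambda^{-p}\|f\|_{WH^p_w}^{p}$.

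For $F_1$, fix an exponent $q>1$ with $w\in A_q$ (for instance $q=p(1+\alpha/n)$, which is $>1$ by the assumption $p>n/(n+\alpha)$) and combine Chebyshev's inequality with the weighted $L^q_w$-boundedness of $\mu^{\rho}_\Omega$ supplied by Theorem~A. The pointwise bound $|\lambda_{k,i}a_{k,i}(x)|\le C\,2^{k}$ on $Q_{k,i}$ together with the bounded overlap gives $\|\sum_i\lambda_{k,i}a_{k,i}\|_{L^q_w}^{q}\le C\,2^{k(q-p)}\|f\|_{WH^p_w}^{p}$; Minkowski's inequality in $L^q_w$ and the geometric growth in $k$ then yield $\|F_1\|_{L^q_w}^{q}\le C\,2^{k_0(q-p)}\|f\|_{WH^p_w}^{p}\le C\,\lambda^{q-p}\|f\|_{WH^p_w}^{p}$, after which Chebyshev produces the desired bound.

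For $F_2$, enlarge each cube to $Q^*_{k,i}=2\sqrt{n}\,Q_{k,i}$ and set $E^*=\bigcup_{k\ge k_0}\bigcup_i Q^*_{k,i}$. The doubling property of $w\in A_\infty$ together with the coefficient control gives $w(E^*)\le C\,\lambda^{-p}\|f\|_{WH^p_w}^{p}$. For $x\notin E^*$, apply the pointwise decay estimate developed in the proof of Theorem~1, which exploits $\int a_{k,i}=0$, the Lipschitz regularity of $\Omega$, and the split of $F^{\rho}_{\Omega,t}(x)$ by the size of $t$ compared with $|x-x_{k,i}|$, to obtain $|\mu^{\rho}_\Omega a_{k,i}(x)|\le C\,(r_{k,i}/|x-x_{k,i}|)^{n+\alpha}w(Q_{k,i})^{-1/p}$. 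The hypothesis $w\in A_{p(1+\alpha/n)}$ together with the open-end self-improvement of $A_p$ weights then ensures $\int_{(Q^*_{k,i})^{c}}(r_{k,i}/|x-x_{k,i}|)^{p(n+\alpha)}w(x)\,dx\le C\,w(Q_{k,i})$, and applying $(\sum|\cdot|)^p\le\sum|\cdot|^p$ (valid since $p\le1$) together with Chebyshev on the far field produces the remaining $C\,\lambda^{-p}\|f\|_{WH^p_w}^{p}$ contribution.

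The main obstacle will be the bookkeeping in the far-field sum for $F_2$: the per-atom estimate combined with $|\lambda_{k,i}|^p\le C\,2^{kp}w(Q_{k,i})$ and the level-wise bound $\sum_i w(Q_{k,i})\le C\,2^{-kp}\|f\|_{WH^p_w}^{p}$ yields a contribution that is essentially uniform in $k$, so that a naive summation over $k\ge k_0$ would diverge. The technical heart of the argument is therefore the careful distribution of the threshold $\lambda/2$ across the levels---combining the strong $L^q_w$-bound (Theorem~A) on the near portion of each level with the Lipschitz decay on the far portion---so that the eventual summation balances against the $A_{p(1+\alpha/n)}$ condition and the bounded overlap of the atoms to collapse to $C\,\lambda^{-p}\|f\|_{WH^p_w}^{p}$ rather than diverging.
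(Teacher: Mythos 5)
Your overall architecture coincides with the paper's: split $f$ at the level $k_0$ with $2^{k_0}\le\lambda<2^{k_0+1}$, handle $F_1$ by a strong $L^q_w$ bound (Theorem A) plus Chebyshev, and handle $F_2$ by removing an exceptional union of enlarged cubes and using the pointwise Lipschitz decay outside. Your $F_1$ estimate is sound (the paper works in $L^2_w$, noting $w\in A_{p(1+\alpha/n)}\subseteq A_2$, where you work in $L^q_w$ with $q=p(1+\alpha/n)$; both are fine). But the $F_2$ step, as you yourself concede in your final paragraph, does not close: with the fixed dilation $Q^*_{k,i}=2\sqrt n\,Q_{k,i}$, the far-field contribution of level $k$ is bounded by $C\,2^{kp}\sum_i w(Q_{k,i})\le C\|f\|^p_{WH^p_w}$, which is uniform in $k$, and the sum over $k\ge k_0$ diverges. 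Acknowledging this obstacle and gesturing at a ``careful distribution of the threshold across levels'' is not an argument; the one idea that actually makes the theorem go through is missing from the proposal.

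That missing idea, in the paper, is to let the dilation factor grow geometrically with the level: one sets $\widetilde{Q^k_i}=Q\bigl(x^k_i,\tau^{(k-k_0)/(n+\alpha)}(2\sqrt n)\,r^k_i\bigr)$ for a fixed $1<\tau<2$ and takes the exceptional set $A_{k_0}=\bigcup_{k>k_0}\bigcup_i\widetilde{Q^k_i}$. Two competing effects are then balanced. First, since $nq/(n+\alpha)=p$ for $q=p(1+\alpha/n)$, Lemma 2.1 gives $w(\widetilde{Q^k_i})\le C\,\tau^{(k-k_0)p}\,w(Q^k_i)$, so
\begin{equation*}
\lambda^p\,w(A_{k_0})\le C\,\|f\|^p_{WH^p_w}\sum_{k>k_0}\Bigl(\frac{\tau}{2}\Bigr)^{(k-k_0)p}<\infty ,
\end{equation*}
precisely because $\tau<2$. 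Second, outside $\widetilde{Q^k_i}$ the far-field integral now starts at radius $\tau^{(k-k_0)/(n+\alpha)}\sqrt n\,r^k_i$ instead of $\sqrt n\,r^k_i$; invoking the openness of the $A_q$ classes to choose $\varepsilon>0$ with $q-\varepsilon\ge1$ and $w\in A_{q-\varepsilon}$, Lemmas 2.2 and 2.1 combine to produce a per-level gain of $\tau^{-(k-k_0)n\varepsilon/(n+\alpha)}$, i.e.\ geometric decay in $k-k_0$, which is exactly what restores convergence of the sum over $k>k_0$ (the same device handles the $(n+1)$-decay term using $w\in A_{p(1+1/n)}$). Without this level-dependent expansion, or some equivalent mechanism, your far-field estimates cannot be summed, so the proposal has a genuine gap at its technical core.
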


\section{Notations and preliminaries}

Let us first recall some standard definitions and notations of $A_p$ weights. The classical $A_p$ weight theory was first introduced by Muckenhoupt in the study of weighted $L^p$ boundedness of Hardy-Littlewood maximal functions in \cite{muckenhoupt}. Let $w$ be a nonnegative, locally integrable function defined on $\mathbb R^n$; all cubes are assumed to have their sides parallel to the coordinate axes. For $1<p<\infty$, a weight function $w$ is said to belong to $A_p$, if there is a constant $C>0$ such that for every cube $Q\subseteq \mathbb R^n$,
\begin{equation}
\left(\frac1{|Q|}\int_Q w(x)\,dx\right)\left(\frac1{|Q|}\int_Q w(x)^{-1/{(p-1)}}\,dx\right)^{p-1}\le C,
\end{equation}
where $|Q|$ denotes the Lebesgue measure of $Q$. For the case $p=1$, $w\in A_1$, if there is a constant $C>0$ such that for every cube $Q\subseteq \mathbb R^n$,
\begin{equation}
\frac1{|Q|}\int_Q w(x)\,dx\le C\cdot\underset{x\in Q}{\mbox{ess\,inf}}\;w(x).
\end{equation}
A weight function $w\in A_\infty$ if it satisfies the $A_p$ condition for some $1<p<\infty$. It is well known that if $w\in A_p$ with $1<p<\infty$, then $w\in A_r$ for all $r>p$, and $w\in A_q$ for some $1<q<p$. We thus write $q_w\equiv\inf\{q>1:w\in A_q\}$ to denote the critical index of $w$. Given a cube $Q$ and $\lambda>0$, $\lambda Q$ stands for the cube with the same center as $Q$ whose side length is $\lambda$ times that of $Q$. $Q=Q(x_0,r)$ denotes the cube centered at $x_0$ with side length $r$. For a weight function $w$ and a measurable set $E$, we set the weighted measure of $E$ by $w(E)$, where $w(E)=\int_E w(x)\,dx$.

We state the following results that will be used later on.

\newtheorem{lemma}[theorem]{Lemma}

\begin{lemma}[\cite{garcia2}]
Let $w\in A_q$ with $q\ge1$. Then, for any cube $Q$, there exists an absolute constant $C>0$ such that
$$w(2Q)\le C\,w(Q).$$
In general, for any $\lambda>1$, we have
$$w(\lambda Q)\le C\cdot\lambda^{nq}w(Q),$$
where $C$ does not depend on $Q$ or $\lambda$.
\end{lemma}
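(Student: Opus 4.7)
The plan is to prove the inequality $w(\lambda Q)\le C\lambda^{nq}w(Q)$ directly; the claim $w(2Q)\le C\,w(Q)$ is just the specialization to $\lambda=2$, so I would combine both into a single argument.

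First I would dispose of the case $q=1$, which is the easiest. Since $Q\subseteq\lambda Q$, we have $\operatorname*{ess\,inf}_{\lambda Q}w\le\operatorname*{ess\,inf}_{Q}w$, so the $A_1$ condition applied on $\lambda Q$ gives
\begin{equation*}
w(\lambda Q)=|\lambda Q|\cdot\frac1{|\lambda Q|}\int_{\lambda Q}w\,dx
\le C\,|\lambda Q|\operatorname*{ess\,inf}_{\lambda Q}w
\le C\lambda^{n}|Q|\operatorname*{ess\,inf}_{Q}w
\le C\lambda^{n}w(Q),
\end{equation*}
which is exactly the desired bound with exponent $nq=n$.

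For the main case $q>1$, the key step is a Hölder's inequality argument relating the Lebesgue and weighted measures of any measurable set $E$. Writing $1=w^{1/q}\cdot w^{-1/q}$ and applying Hölder with exponents $q$ and $q/(q-1)$,
\begin{equation*}
|E|=\int_E w^{1/q}\cdot w^{-1/q}\,dx
\le\Big(\int_E w\,dx\Big)^{1/q}\Big(\int_E w^{-1/(q-1)}\,dx\Big)^{(q-1)/q}.
\end{equation*}
Raising to the $q$-th power and applying this with $E=Q$, then enlarging the second integral to $\lambda Q$, yields
\begin{equation*}
|Q|^{q}\le w(Q)\Big(\int_{\lambda Q}w^{-1/(q-1)}\,dx\Big)^{q-1}.
\end{equation*}
The $A_q$ condition on $\lambda Q$ controls the last factor by $C|\lambda Q|^q/w(\lambda Q)$, so after rearranging,
\begin{equation*}
\frac{w(Q)}{w(\lambda Q)}\ge\frac1{C}\Big(\frac{|Q|}{|\lambda Q|}\Big)^{q}=\frac1{C\lambda^{nq}},
\end{equation*}
which gives the stated doubling bound.

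The only genuine obstacle is choosing the right pairing of exponents in Hölder so that the integrand $w^{-1/(q-1)}$ matches precisely the weight appearing in the $A_q$ condition; once this is aligned, the rest is algebraic manipulation. No additional machinery (covering lemmas, reverse Hölder, etc.) is needed, which is why this doubling estimate is the most elementary consequence of the $A_q$ condition.
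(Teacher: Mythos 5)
Your proof is correct. Note first that the paper itself offers no proof of this lemma: it is quoted from Garcia-Cuerva and Rubio de Francia \cite{garcia2} as a known fact, so there is no argument in the paper to compare against line by line. Your two-case argument is the standard one and it is sound: the $q=1$ case follows correctly from the $A_1$ condition on $\lambda Q$ together with $|Q|\operatorname*{ess\,inf}_{Q}w\le w(Q)$, and in the $q>1$ case the H\"older step
\begin{equation*}
|Q|^{q}\le w(Q)\Big(\int_{\lambda Q}w^{-1/(q-1)}\,dx\Big)^{q-1},
\end{equation*}
combined with the $A_q$ condition on $\lambda Q$, gives exactly $w(\lambda Q)\le C\lambda^{nq}w(Q)$ with $C$ the $A_q$ constant of $w$, independent of $Q$ and $\lambda$. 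It is worth observing that your $q>1$ computation is precisely the proof of the paper's Lemma 2.3 (the comparison $C(|E|/|Q'|)^{q}\le w(E)/w(Q')$ for measurable $E\subseteq Q'$), specialized to $E=Q$ sitting inside the cube $Q'=\lambda Q$; had you taken Lemma 2.3 as given, the whole lemma (including $q=1$, since Lemma 2.3 is stated for $q\ge1$) would follow in one line. So your argument buys self-containedness at the cost of essentially re-deriving a neighboring lemma, which is a perfectly reasonable trade; the only cosmetic suggestion is to unify the two cases through that comparison estimate rather than treating $q=1$ separately.
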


\begin{lemma}[\cite{garcia2}]
Let $w\in A_q$ with $q>1$. Then, for all $r>0$, there exists a constant $C>0$ independent of $r$ such that
\begin{equation*}
\int_{|x|\ge r}\frac{w(x)}{|x|^{nq}}\,dx\le C\cdot r^{-nq}w\big(Q(0,2r)\big).
\end{equation*}
\end{lemma}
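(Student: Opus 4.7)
The plan is to partition $\{|x|\ge r\}$ into dyadic annuli, bound the weighted measure of each by Lemma 2.1, and exploit the self-improvement (open-end) property of the Muckenhoupt class $A_q$ to force the resulting geometric series to converge.

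Concretely, set $A_k=\{x:2^k r\le|x|<2^{k+1}r\}$ for $k\ge 0$. On $A_k$ we have $|x|^{-nq}\le (2^k r)^{-nq}$, and the Euclidean ball $\{|x|<2^{k+1}r\}$ is contained in the cube $Q(0,2^{k+2}r)=2^{k+1}Q(0,2r)$. Consequently
$$\int_{|x|\ge r}\frac{w(x)}{|x|^{nq}}\,dx\le\sum_{k=0}^\infty(2^k r)^{-nq}\,w\bigl(Q(0,2^{k+2}r)\bigr).$$
If one applies Lemma 2.1 directly with the doubling exponent $nq$ coming from the $A_q$ condition, the resulting general term is of order $r^{-nq}w(Q(0,2r))$ independent of $k$, so the series diverges and a sharper doubling exponent is required.

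To obtain it, I would invoke the fact that since $w\in A_q$ with $q>1$, the critical index satisfies $q_w<q$, so there exists $q'$ with $1<q'<q$ and $w\in A_{q'}$. Applying Lemma 2.1 to $w$ as an $A_{q'}$ weight yields
$$w\bigl(Q(0,2^{k+2}r)\bigr)\le C\cdot 2^{(k+1)nq'}\,w\bigl(Q(0,2r)\bigr),$$
and substituting leads to
$$\int_{|x|\ge r}\frac{w(x)}{|x|^{nq}}\,dx\le C\,r^{-nq}\,w\bigl(Q(0,2r)\bigr)\sum_{k=0}^\infty 2^{kn(q'-q)},$$
where the geometric series converges because $q'-q<0$, delivering the claimed bound.

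The only conceptually delicate point is the self-improvement step; everything else reduces to a standard dyadic decomposition, the elementary containment $\{|x|<R\}\subseteq Q(0,2R)$, and summing a convergent geometric series.
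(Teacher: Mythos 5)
Your argument is correct and complete, but there is nothing in the paper to compare it against: the paper states this result as Lemma 2.2 and quotes it from Garcia-Cuerva and Rubio de Francia \cite{garcia2} without proof, so you have in effect supplied the missing proof, and you do so using only tools the paper already records (Lemma 2.1 and the open-end property of $A_q$ stated in Section 2). Your route is the standard one, and you correctly isolated the one genuinely delicate point: on the annuli $A_k=\{x:2^kr\le|x|<2^{k+1}r\}$, applying the doubling bound of Lemma 2.1 with the exponent $nq$ that $w\in A_q$ itself provides gives a general term of size $r^{-nq}w(Q(0,2r))$ uniformly in $k$, so that series diverges; the fix is self-improvement, $w\in A_{q'}$ for some $1<q'<q$, which lowers the doubling exponent to $nq'$ and makes $\sum_{k\ge0}2^{kn(q'-q)}$ a convergent geometric series. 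This is also exactly where the hypothesis $q>1$ enters, and the lemma is false for $q=1$ (take $w\equiv1$, for which the integral diverges). Your containments $A_k\subseteq\{|x|<2^{k+1}r\}\subseteq Q(0,2^{k+2}r)=2^{k+1}Q(0,2r)$ are correct, and the resulting constant, essentially $2^{nq'}\bigl(1-2^{n(q'-q)}\bigr)^{-1}$ times the doubling constant, is independent of $r$ as required. Two minor remarks: you may want to rename your auxiliary exponent, since the paper uses $q'$ for the conjugate exponent $q/(q-1)$ in Section 3; and there is an alternative one-line proof worth knowing, namely that $M(\chi_{Q(0,2r)})(x)\ge c\,(r/|x|)^n$ for $|x|\ge r$, so the left-hand side is bounded by $C\,r^{-nq}\int_{\mathbb R^n}M(\chi_{Q(0,2r)})(x)^q\,w(x)\,dx\le C\,r^{-nq}w\bigl(Q(0,2r)\bigr)$ by Muckenhoupt's theorem on the $L^q_w$-boundedness of the Hardy--Littlewood maximal operator, which again uses $q>1$ in an essential way.
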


\begin{lemma}[\cite{garcia2}]
Let $w\in A_q$ with $q\ge1$. Then there exists an absolute constant $C>0$ such that
\begin{equation*}
C\cdot\left(\frac{|E|}{|Q|}\right)^q\le\frac{w(E)}{w(Q)}
\end{equation*}
for any measurable subset $E$ of a cube $Q$.
\end{lemma}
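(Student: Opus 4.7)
My plan is to derive the inequality directly from the defining conditions of $A_q$, treating $q>1$ and $q=1$ as separate cases. The key trick in the former is to rewrite $|E|$ using the factorization $1=w^{1/q}w^{-1/q}$ and apply Hölder's inequality, after which the $A_q$ condition controls the remaining term on the ambient cube $Q$. The $q=1$ case is then an immediate consequence of the pointwise lower bound that $A_1$ supplies.

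For $q>1$, I would start from
\[
|E|=\int_E w(x)^{1/q}w(x)^{-1/q}\,dx,
\]
and apply Hölder's inequality with exponents $q$ and $q/(q-1)$ to obtain
\[
|E|^q\le w(E)\left(\int_E w(x)^{-1/(q-1)}\,dx\right)^{q-1}.
\]
Since $E\subseteq Q$, the second factor is bounded above by the corresponding integral over $Q$, which by the $A_q$ condition (2.1) is at most $C|Q|^q/w(Q)$ after normalising away the averaging factors of $|Q|$. Combining gives $|E|^q\le C|Q|^q\,w(E)/w(Q)$, and rearranging yields the desired inequality with constant $1/C$.

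For $q=1$, the $A_1$ condition (2.2) implies $\mathrm{ess\,inf}_{x\in Q}w(x)\ge C^{-1}w(Q)/|Q|$; integrating this pointwise bound over the subset $E\subseteq Q$ produces $w(E)\ge C^{-1}(|E|/|Q|)\,w(Q)$, which is exactly the claim in this endpoint case. I do not anticipate any genuine obstacle here, as this lemma is a textbook consequence of the definitions; the only care needed is in correctly tracking the Hölder exponents and the powers of $|Q|$ that emerge from rewriting the $A_q$ averages as raw integrals.
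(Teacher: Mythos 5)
Your proof is correct, and since the paper states this lemma with only a citation to Garcia-Cuerva and Rubio de Francia (providing no proof of its own), your argument is exactly the standard one that the cited reference supplies: H\"older's inequality with exponents $q$ and $q/(q-1)$ against the factorization $1=w^{1/q}w^{-1/q}$, the $A_q$ condition applied on the ambient cube $Q$, and the essential-infimum bound in the endpoint case $q=1$. Both the exponent bookkeeping and the case split covering all $q\ge1$ are handled correctly, so there is nothing to fix.
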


Given a weight function $w$ on $\mathbb R^n$, for $0<p<\infty$, we denote by $L^p_w(\mathbb R^n)$ the weighted space of all functions $f$ satisfying
\begin{equation}
\|f\|_{L^p_w}=\bigg(\int_{\mathbb R^n}|f(x)|^pw(x)\,dx\bigg)^{1/p}<\infty.
\end{equation}
When $p=\infty$, $L^\infty_w(\mathbb R^n)$ will be taken to mean $L^\infty(\mathbb R^n)$, and we set
\begin{equation}
\|f\|_{L^\infty_w}=\|f\|_{L^\infty}=\underset{x\in\mathbb R^n}{\mbox{ess\,sup}}\,|f(x)|<\infty.
\end{equation}
We also let $WL^p_w(\mathbb R^n)$ denote the weighted weak $L^p$ space of all those measurable functions $f$ which satisfy
\begin{equation}
\|f\|_{WL^p_w}=\sup_{\lambda>0}\lambda\cdot w\big(\big\{x\in\mathbb R^n:|f(x)|>\lambda \big\}\big)^{1/p}<\infty.
\end{equation}

We write $\mathscr S(\mathbb R^n)$ to denote the Schwartz space of all rapidly decreasing infinitely differentiable functions and $\mathscr S'(\mathbb R^n)$ to denote the space of all tempered distributions, i.e., the topological dual of $\mathscr S(\mathbb R^n)$. As we know, for any $0<p\le1$, the weighted Hardy spaces $H^p_w(\mathbb R^n)$ can be defined in terms of maximal functions.
Let $\varphi$ be a function in $\mathscr S(\mathbb R^n)$ satisfying $\int_{\mathbb R^n}\varphi(x)\,dx=1$.
Set
\begin{equation*}
\varphi_t(x)=t^{-n}\varphi(x/t),\quad t>0,\;x\in\mathbb R^n.
\end{equation*}
We will define the radial maximal function $M_\varphi f(x)$ by
\begin{equation*}
M_\varphi f(x)=\sup_{t>0}\big|(\varphi_t*f)(x)\big|.
\end{equation*}
Then the weighted Hardy space $H^p_w(\mathbb R^n)$ consists of those tempered distributions $f\in\mathscr S'(\mathbb R^n)$ for which
$M_\varphi f\in L^p_w(\mathbb R^n)$ with $\|f\|_{H^p_w}=\|M_\varphi f\|_{L^p_w}$. The real-variable theory of weighted Hardy spaces has been extensively investigated by many authors. For example, Garcia-Cuerva \cite{garcia1} studied the atomic decomposition and the dual spaces of $H^p_w$ for $0<p\le1$. The molecular characterization of $H^p_w$ for $0<p\le1$ was given by Lee and Lin \cite{lee2}. For more information about the continuity properties of some operators on weighted Hardy spaces, the reader is referred to \cite{ding7,ding8,lee1,lee3,li,lin}.

In this article, we will use Garcia-Cuerva's atomic decomposition theory for weighted Hardy spaces in \cite{garcia1,stomberg}. We characterize weighted Hardy spaces in terms of atoms in the following way.

Let $0<p\le1\le q\le\infty$ and $p\ne q$ such that $w\in A_q$ with critical index $q_w$. Set [\,$\cdot$\,] the greatest integer function. For $s\in \mathbb Z_+$ satisfying $s\ge N=[n({q_w}/p-1)],$ a real-valued function $a(x)$ is called a ($p,q,s$)-atom centered at $x_0$ with respect to $w$ (or a $w$-($p,q,s$)-atom centered at $x_0$) if the following conditions are satisfied:

(a) $a\in L^q_w(\mathbb R^n)$ and is supported in a cube $Q$ centered at $x_0$;

(b) $\|a\|_{L^q_w}\le w(Q)^{1/q-1/p}$;

(c) $\int_{\mathbb R^n}a(x)x^\alpha\,dx=0$ for every multi-index $\alpha$ with $|\alpha|\le s$.

\begin{theorem}
Let $0<p\le1\le q\le\infty$ and $p\ne q$ such that $w\in A_q$ with critical index $q_w$. For each $f\in H^p_w(\mathbb R^n)$, there exist a sequence \{$a_j$\} of $w$-$(p,q,s)$-atoms and a sequence \{$\lambda_j$\} of real numbers with $\sum_j|\lambda_j|^p\le C\|f\|^p_{H^p_w}$ such that $f=\sum_j\lambda_j a_j$ both in the sense of distributions and in the $H^p_w$ norm.
\end{theorem}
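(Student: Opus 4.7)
The plan is to follow the classical Calderón--Zygmund/Whitney approach adapted to the weighted setting, as developed by Garcia-Cuerva and Strömberg, using the grand maximal function characterization of $H^p_w$ as a starting point. First I would invoke the (standard) equivalence $\|f\|_{H^p_w}\approx \|M_N f\|_{L^p_w}$ for the grand maximal function $M_N f$ with $N=N(n,p,q_w)$ sufficiently large; this equivalence is what allows one to replace the rigid radial maximal function used in the definition by a more flexible object suited to stopping-time arguments. Then I would decompose the level sets $\Omega_k=\{x\in\mathbb R^n: M_N f(x)>2^k\}$ by a Whitney decomposition, yielding cubes $\{Q_{k,j}\}_j$ of bounded overlap whose sidelengths are comparable to $\mathrm{dist}(Q_{k,j},\Omega_k^c)$, together with a smooth partition of unity $\{\eta_{k,j}\}$ subordinate to a fixed dilation of this family.

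Next, I would define modified bad parts $b_{k,j}=(f-P_{k,j})\eta_{k,j}$, where $P_{k,j}$ is the unique polynomial of degree $\le s$ that forces $b_{k,j}$ to be orthogonal to all polynomials of degree $\le s$. A telescoping rearrangement across the levels $k$ produces an identity of the form $f=\sum_{k,j}\lambda_{k,j}a_{k,j}$ in $\mathscr S'(\mathbb R^n)$, where each $a_{k,j}$ is supported in a fixed dilate of $Q_{k,j}$, has the required vanishing moments, and satisfies a pointwise bound of order $2^k$. The pointwise bound is the standard consequence of the Whitney construction: on each $Q_{k,j}$ one controls $f$ modulo polynomials of degree $\le s$ by the value of $M_N f$ at a nearby point of $\Omega_k^c$, which is at most $2^k$. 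Normalizing by the weighted measure, one sets $\lambda_{k,j}=C\,2^k\, w(Q_{k,j})^{1/p}$, so that $\|a_{k,j}\|_{L^q_w}\le w(Q_{k,j})^{1/q-1/p}$ and conditions (a)--(c) of a $w$-$(p,q,s)$-atom are satisfied.

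Finally, to estimate $\sum_{k,j}|\lambda_{k,j}|^p$, I would use the bounded overlap of the Whitney cubes, which gives $\sum_j w(Q_{k,j})\le C\,w(\Omega_k)$, and then sum on $k$:
\begin{equation*}
\sum_{k,j}|\lambda_{k,j}|^p\le C\sum_k 2^{kp}\,w(\Omega_k)\le C\int_0^\infty \lambda^{p-1}w\bigl(\{M_N f>\lambda\}\bigr)\,d\lambda\le C\|M_N f\|_{L^p_w}^p\approx \|f\|_{H^p_w}^p.
\end{equation*}
Convergence in $\mathscr S'(\mathbb R^n)$ follows directly from the Whitney estimates on the tails, while convergence in the $H^p_w$ quasi-norm follows by controlling the grand maximal function of the partial sums by a geometric tail.

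The main obstacle I expect is the delicate bookkeeping behind the choice $s\ge[n(q_w/p-1)]$: this threshold is forced by the step in which one must absorb a factor coming from the doubling constants of $w$ (via Lemma 2.3, the $A_q$ inequality $(|E|/|Q|)^q\le C\,w(E)/w(Q)$) into the $L^q_w$ bound on the atom. Tracking through the polynomial reproducing argument with enough vanishing moments to overcome this power of $q_w/p$, and simultaneously checking that the cancellation conditions survive the telescoping rearrangement, is where the proof requires the most care; the rest is a weighted adaptation of the familiar unweighted atomic decomposition.
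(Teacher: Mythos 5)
The paper does not prove this statement at all: it appears as Theorem 2.4, quoted as a known result and imported from Garcia-Cuerva \cite{garcia1} and Str\"omberg--Torchinsky \cite{stomberg}, so there is no internal argument to compare yours against; the meaningful comparison is with the proof in those references. Your outline reproduces, in its essential steps, exactly that standard proof --- grand maximal characterization of $H^p_w$, Whitney decomposition of the level sets $\{M_Nf>2^k\}$, bad parts $(f-P_{k,j})\eta_{k,j}$ corrected by polynomial projections, telescoping across levels, $L^\infty$ bounds of order $2^k$ on the resulting atoms, and layer-cake summation of $\sum_{k,j}|\lambda_{k,j}|^p$ --- and all of these steps are sound, granting that the full verification of the telescoping identity (with the additional polynomial terms needed to restore the moment conditions after rearrangement) is genuinely lengthy and is only gestured at in your sketch. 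One correction, though: your diagnosis of where the threshold $s\ge[n(q_w/p-1)]$ is forced is off target. The $L^q_w$ size estimate for the atoms never involves $s$, and Lemma 2.3 plays no role there; the threshold comes from the requirement that a single $w$-$(p,q,s)$-atom have uniformly bounded $H^p_w$ quasi-norm. Away from its supporting cube $Q$, the maximal function of an atom with $s$ vanishing moments decays like $|x-x_0|^{-(n+s+1)}$, and integrating the $p$-th power of this against $w$ over dyadic annuli, using the growth estimate $w(\lambda Q)\le C\lambda^{nq}w(Q)$ of Lemma 2.1, requires $(n+s+1)p>nq_w$, which is precisely $s\ge[n(q_w/p-1)]$. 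This uniform bound is not cosmetic in your argument: it is exactly what you need, and tacitly use, in your final sentence when you claim that the series converges to $f$ in the $H^p_w$ quasi-norm, since by $p$-subadditivity the tails are controlled by $\sum_{k,j}|\lambda_{k,j}|^p\cdot\sup_{k,j}\big\|a_{k,j}\big\|^p_{H^p_w}$.
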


Let us now turn to the weighted weak Hardy spaces, which are good substitutes for the weighted
Hardy spaces in the study of the boundedness of some operators. The (unweighted) weak $H^p$ spaces have first appeared in the work of Fefferman, Rivi\`ere and Sagher \cite{cfefferman}, which are the intermediate spaces between two Hardy spaces through the real method of interpolation. The atomic decomposition characterization of weak $H^1$ space on $\mathbb R^n$ was given by Fefferman and Soria in \cite{rfefferman}. Later, Liu \cite{liu} established the weak $H^p$ spaces on homogeneous groups for the whole range $0<p\le1$. The corresponding results related to $\mathbb R^n$ can be found in \cite{lu}. For the boundedness properties of some operators on weak Hardy spaces, we refer the readers to \cite{ding1,ding2,ding3,ding4,ding5,ding6,liu2,tao}. In 2000, Quek and Yang \cite{quek} introduced the weighted weak Hardy spaces $WH^p_w(\mathbb R^n)$ and established their atomic decompositions. Moreover, by using the atomic decomposition theory of $WH^p_w(\mathbb R^n)$, Quek and Yang \cite{quek} also obtained the boundedness of Calder\'on-Zygmund type operators on these weighted spaces.

Let $w\in A_\infty$, $0<p\le1$ and $N=[n(q_w/p-1)]$. Define
\begin{equation*}
\mathscr A_{N,w}=\Big\{\varphi\in\mathscr S(\mathbb R^n):\sup_{x\in\mathbb R^n}\sup_{|\alpha|\le N+1}(1+|x|)^{N+n+1}\big|D^\alpha\varphi(x)\big|\le1\Big\},
\end{equation*}
where $\alpha=(\alpha_1,\dots,\alpha_n)\in(\mathbb N\cup\{0\})^n$, $|\alpha|=\alpha_1+\dots+\alpha_n$, and
\begin{equation*}
D^\alpha\varphi=\frac{\partial^{|\alpha|}\varphi}{\partial x^{\alpha_1}_1\cdots\partial x^{\alpha_n}_n}.
\end{equation*}
For any given $f\in\mathscr S'(\mathbb R^n)$, the grand maximal function of $f$ is defined by
\begin{equation*}
G_w f(x)=\sup_{\varphi\in\mathscr A_{N,w}}\sup_{|y-x|<t}\big|(\varphi_t*f)(y)\big|.
\end{equation*}
Then we can define the weighted weak Hardy space $WH^p_w(\mathbb R^n)$ by $WH^p_w(\mathbb R^n)=\big\{f\in\mathscr S'(\mathbb R^n):G_w f\in WL^p_w(\mathbb R^n)\big\}$. Moreover, we set $\|f\|_{WH^p_w}=\|G_w f\|_{WL^p_w}$.

\begin{theorem}[\cite{quek}]
Let $0<p\le1$ and $w\in A_\infty$. For every $f\in WH^p_w(\mathbb R^n)$, there exists a sequence of bounded measurable functions $\{f_k\}_{k=-\infty}^\infty$ such that

$(i)$ $f=\sum_{k=-\infty}^\infty f_k$ in the sense of distributions.

$(ii)$ Each $f_k$ can be further decomposed into $f_k=\sum_i b^k_i$, where $\{b^k_i\}$ satisfies

\quad $(a)$ Each $b^k_i$ is supported in a cube $Q^k_i$ with $\sum_{i}w\big(Q^k_i\big)\le c2^{-kp}$, and $\sum_i\chi_{Q^k_i}(x)\le c$. Here $\chi_E$ denotes the characteristic function of the set $E$ and $c\sim\big\|f\big\|_{WH^p_w}^p;$

\quad $(b)$ $\big\|b^k_i\big\|_{L^\infty}\le C2^k$, where $C>0$ is independent of $i$ and $k\,;$

\quad $(c)$ $\int_{\mathbb R^n}b^k_i(x)x^\alpha\,dx=0$ for every multi-index $\alpha$ with $|\alpha|\le[n({q_w}/p-1)]$.

Conversely, if $f\in\mathscr S'(\mathbb R^n)$ has a decomposition satisfying $(i)$ and $(ii)$, then $f\in WH^p_w(\mathbb R^n)$. Moreover, we have $\big\|f\big\|_{WH^p_w}^p\sim c.$
\end{theorem}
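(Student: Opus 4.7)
The plan is to follow the Calderón--Zygmund / Whitney decomposition at dyadic levels $\lambda = 2^k$, extending the Fefferman--Soria construction for unweighted weak $H^1$ (and Liu's generalization to $0<p\le 1$) to the weighted setting. Since $G_w f \in WL^p_w$ controls $f$ in the distributional sense, the whole decomposition can be built from the level sets of the grand maximal function.

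Set $N = [n(q_w/p - 1)]$ and, for each $k \in \mathbb Z$, form the open set $\Omega_k = \{x : G_w f(x) > 2^k\}$. Apply the Whitney decomposition to obtain a family of cubes $\{Q^k_i\}_i$ covering $\Omega_k$ with bounded overlap $\sum_i \chi_{Q^k_i} \le c$ and separated from $\Omega_k^c$ by a distance comparable to their side length. Fix a smooth partition of unity $\{\eta^k_i\}$ subordinate to $\{Q^k_i\}$ and choose, via $L^2(Q^k_i)$-projection onto the polynomials of degree $\le N$, a polynomial $P^k_i$ such that $b^k_i := (f - P^k_i)\eta^k_i$ has vanishing moments up to order $N$; this automatically gives condition $(c)$. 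Put $g_k = f - \sum_i b^k_i$ and $f_k = g_{k+1} - g_k$. The bounded overlap combined with the weak-type bound on $G_w f$ gives
$$\sum_i w(Q^k_i) \le c\,w(\Omega_k) \le c\,2^{-kp}\|G_w f\|_{WL^p_w}^p,$$
which is the measure estimate in $(a)$. The Whitney separation together with the non-tangential nature of $G_w f$ yields $\|g_k\|_{L^\infty} \le C 2^k$, and after regrouping the contributions of $b^k_i$ and $b^{k+1}_i$ into a single family of atoms at level $k$ one obtains $(b)$. Telescoping, with $g_k \to f$ in $\mathscr S'$ as $k\to\infty$ (shrinking level sets) and $g_k \to 0$ in $\mathscr S'$ as $k\to -\infty$ (the moments of $b^k_i$ absorb Schwartz testers), delivers $(i)$.

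For the converse one fixes $\lambda > 0$, picks $k_0$ with $2^{k_0}\le\lambda<2^{k_0+1}$, and splits $f = \sum_{k<k_0} f_k + \sum_{k\ge k_0} f_k$. The low part is handled directly via $\|f_k\|_{L^\infty}\lesssim 2^k$; for the high part one estimates $G_w b^k_i$ pointwise, combining the $L^\infty$ bound with Lemma~2.1 on a fixed dilate of $Q^k_i$, and using the vanishing moments together with Lemma~2.2 on the complement. Summing over $i$ and $k\ge k_0$ with $\sum_i w(Q^k_i)\le c\,2^{-kp}$ yields $w(\{G_w f > \lambda\})\le C\lambda^{-p}c$, as required. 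The main obstacle is the $\mathscr S'$-convergence $\sum_k f_k = f$: one must simultaneously handle the $k\to\infty$ limit, where $\Omega_k$ becomes negligible, and the $k\to -\infty$ limit, where only the vanishing-moment condition forces the pairing against a Schwartz function to decay in $k$. Balancing the polynomial order $N$ against the weighted doubling exponent $nq_w$ in Lemma~2.3 is what makes the two endpoint arguments close simultaneously.
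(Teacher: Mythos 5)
First, a structural point: the paper itself never proves this statement. It is Theorem 2.5, imported with the citation \cite{quek} and used as a black box in the proof of Theorem 1.3, so your attempt can only be measured against the construction in the cited literature (Fefferman--Soria, Liu, Quek--Yang). That is indeed the route you take --- level sets $\Omega_k=\{x: G_wf(x)>2^k\}$, Whitney cubes, a subordinate partition of unity, correction polynomials, telescoping --- so the architecture is the standard one. But two steps are defective as written, and they are precisely the steps that carry the content of the theorem.

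The projection is wrong as stated: if $P^k_i$ is the $L^2(Q^k_i)$-projection (with respect to Lebesgue measure) of $f$ onto polynomials of degree $\le N$, then $\int(f-P^k_i)\eta^k_i\,x^\alpha\,dx$ has no reason to vanish, because multiplication by the cutoff $\eta^k_i$ destroys the orthogonality; condition $(c)$ is therefore not ``automatic''. One must instead project with respect to the measure $\eta^k_i\,dx$, i.e. choose $P^k_i$ so that $\langle f-P^k_i,\,q\,\eta^k_i\rangle=0$ for every polynomial $q$ of degree $\le N$ (note also that $f$ is a priori only a tempered distribution, so even defining $P^k_i$ requires this pairing, not an integral), and then separately prove $\|P^k_i\eta^k_i\|_{L^\infty}\le C2^k$ by testing $f$ against suitable dilates of functions in $\mathscr A_{N,w}$ centered near points of $\Omega_k^c$; this bound, not the ``non-tangential nature'' of $G_wf$ alone, is what yields $\|g_k\|_{L^\infty}\le C2^k$. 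More seriously, the single clause ``after regrouping the contributions of $b^k_i$ and $b^{k+1}_i$ into a single family of atoms at level $k$ one obtains $(b)$'' hides the technical heart of the proof: $f_k=\sum_i b^k_i-\sum_j b^{k+1}_j$, and the level-$(k+1)$ pieces are supported on Whitney cubes of $\Omega_{k+1}$, not of $\Omega_k$; attaching them to level-$k$ cubes forces another multiplication by $\eta^k_i$, which again kills the vanishing moments, and the known proofs repair this with a second layer of correction polynomials (projections of $\eta^k_i b^{k+1}_j$), verify that these corrections cancel when summed over $i$, and re-check the $L^\infty$ bound and the support condition for the repaired pieces. Without that bookkeeping, properties $(a)$--$(c)$ for the pieces of $f_k$ --- which are exactly what the theorem asserts --- are not established. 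A smaller gap in the converse: the low part cannot be ``handled directly'' by $\|f_k\|_{L^\infty}\lesssim 2^k$, since that only gives $G_w\big(\sum_{k<k_0}f_k\big)\le C\lambda$ with an absolute constant $C$, which says nothing about $w(\{G_w F_1>\lambda/2\})$; you must combine the $L^\infty$ bound with $\sum_i w(Q^k_i)\le c2^{-kp}$ into an $L^q_w$ estimate for some $q>q_w$ and apply Chebyshev's inequality --- exactly what the present paper does with $F_1$ in its proof of Theorem 1.3 (inequality (5.1)).
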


Throughout this article $C$ always denotes a positive constant, which is independent of the main parameters and not necessarily the same at each occurrence. Moreover, we use $A\sim B$ to mean the equivalence of $A$ and $B$; that is, there exist two positive constants $C_1$ and $C_2$ independent of $A$, $B$ such that $C_1A\le B\le C_2A$.

\section{Proof of Theorem 1.1}

\begin{proof}[Proof of Theorem 1.1]
Set $q=p(1+\frac{\alpha}{n})$. Then by our assumption, we have $[n({q_w}/p-1)]=0$ provided that $w\in A_q$. In view of Theorem 2.4 and Theorem A, it suffices to show that for any $w$-$(p,q,0)$-atom $a$, there exists a constant $C>0$ independent of $a$ such that $\big\|\mu^{\rho}_\Omega(a)\big\|_{L^p_w}\le C$. Let $a(x)$ be a $w$-$(p,q,0)$-atom with supp\,$a\subseteq Q=Q(x_0,r)$, and let $Q^*=2\sqrt nQ$. We write
\begin{equation*}
\begin{split}
\big\|\mu^{\rho}_\Omega(a)\big\|^p_{L^p_w}&=\int_{Q^*}\big|\mu^{\rho}_\Omega(a)(x)\big|^pw(x)\,dx
+\int_{(Q^*)^c}\big|\mu^{\rho}_\Omega(a)(x)\big|^pw(x)\,dx\\
&=I_1+I_2.
\end{split}
\end{equation*}
For the term $I_1$, by using H\"older's inequality with exponent $s=q/p>1$, the size condition of atom $a$, Lemma 2.1 and Theorem A, we have
\begin{equation*}
\begin{split}
I_1&\le\bigg(\int_{Q^*}\big|\mu^{\rho}_\Omega(a)(x)\big|^qw(x)\,dx\bigg)^{p/q}
\bigg(\int_{Q^*}w(x)\,dx\bigg)^{1-p/q}\\
&\le \big\|\mu^{\rho}_\Omega(a)\big\|^p_{L^q_w}\big[w(Q^*)\big]^{1-p/q}\\
&\le C\cdot\big\|\mu^{\rho}_\Omega(a)\big\|^p_{L^q_w}\big[w(Q)\big]^{1-p/q}\\
&\le C\cdot\|a\|^p_{L^q_w}\big[w(Q)\big]^{1-p/q}\\
&\le C\notag.
\end{split}
\end{equation*}
Let us now turn to estimate the other term $I_2$. For $0<\rho<n$, if we set $\psi^{\rho}(x)=\Omega(x)|x|^{-n+\rho}\chi_{\{|x|\le1\}}(x)$, then for given atom $a$, we can see that
\begin{equation}
\mu^{\rho}_\Omega(a)(x)=\left(\int_0^\infty\big|(\psi^{\rho}_t*a)(x)\big|^2\frac{dt}{t}\right)^{1/2},
\end{equation}
where $\psi^{\rho}_t(x)=t^{-n}\psi^{\rho}(x/t)$. Thus, by the vanishing moment condition of atom $a$, we deduce that
\begin{equation*}
\begin{split}
\big|(\psi^{\rho}_t*a)(x)\big|=\,&\frac{1}{t^{\rho}}\cdot
\left|\int_Q\bigg[\frac{\Omega(x-y)}{|x-y|^{n-\rho}}-\frac{\Omega(x-x_0)}{|x-x_0|^{n-\rho}}\bigg]a(y)\,dy\right|\\
\le\,&\frac{1}{t^{\rho}}\cdot\int_Q\bigg|\frac{1}{|x-y|^{n-\rho}}-\frac{1}{|x-x_0|^{n-\rho}}\bigg|
\big|\Omega(x-y)a(y)\big|\,dy\\
&+\frac{1}{t^{\rho}}\cdot\int_Q\frac{|\Omega(x-y)-\Omega(x-x_0)|}{|x-x_0|^{n-\rho}}|a(y)|\,dy\\
=\,&\mbox{\upshape I+II}.
\end{split}
\end{equation*}
Clearly, the condition $\Omega\in Lip_\alpha(S^{n-1})$($0<\alpha\le1$) implies that $\Omega\in L^\infty(S^{n-1})$. Notice also that when $x\in(Q^*)^c$ and $y\in Q$, then we get $|x-y|\sim |x-x_0|$. Hence, we apply the mean value theorem to obtain
\begin{align}
\mbox{\upshape I}&\le \frac{1}{t^{\rho}}\cdot\|\Omega\|_{L^\infty}\int_Q
\bigg|\frac{1}{|x-y|^{n-\rho}}-\frac{1}{|x-x_0|^{n-\rho}}\bigg||a(y)|\,dy\notag\\
&\le C\cdot\frac{r}{t^{\rho}|x-x_0|^{n-\rho+1}}\int_Q|a(y)|\,dy.
\end{align}
On the other hand, from the definition of $\Omega\in Lip_\alpha(S^{n-1})$($0<\alpha\le1$), we can easily check that
\begin{align}
\big|\Omega(x-y)-\Omega(x-x_0)\big|&\le\left|\Omega\Big(\frac{x-y}{|x-y|}\Big)
-\Omega\Big(\frac{x-x_0}{|x-x_0|}\Big)\right|\notag\\
&\le C\left|\frac{x-y}{|x-y|}-\frac{x-x_0}{|x-x_0|}\right|^\alpha\notag\\
&\le C\left(\frac{|y-x_0|}{|x-x_0|}\right)^\alpha.
\end{align}
Substituting the above inequality (3.3) into the term II, then we get
\begin{equation}
\mbox{\upshape II}\le C\cdot\frac{r^\alpha}{t^{\rho}|x-x_0|^{n-\rho+\alpha}}\int_Q|a(y)|\,dy.
\end{equation}
Denote the conjugate exponent of $q>1$ by $q'=q/{(q-1)}$. Then it follows from H\"older's inequality, the $A_q$ condition and the size condition of atom $a$ that
\begin{align}
\int_Q\big|a(y)\big|\,dy&\le\left(\int_Q\big|a(y)\big|^qw(y)\,dy\right)^{1/q}\left(\int_Q w(y)^{-{q'}/q}\,dy\right)^{1/q'}\notag\\
&\le C\cdot\|a\|_{L^q_w}\left(\frac{|Q|^q}{w(Q)}\right)^{1/q}\notag\\
&\le C\cdot\frac{|Q|}{[w(Q)]^{1/p}}.
\end{align}
Observe that supp\,$\psi^{\rho}\subseteq\{x\in\mathbb R^n:|x|\le1\}$, then for any $y\in Q$ and $x\in(Q^*)^c$, we have
\begin{equation}
t\ge|x-y|\ge|x-x_0|-|y-x_0|\ge\frac{|x-x_0|}{2}.
\end{equation}
From the above estimates (3.5) and (3.6), it follows that for any $x\in(Q^*)^c$,
\begin{align}
\big|\mu^{\rho}_\Omega(a)(x)\big|&\le C\Bigg(\frac{r^{n+1}}{|x-x_0|^{n-\rho+1}[w(Q)]^{1/p}}+
\frac{r^{n+\alpha}}{|x-x_0|^{n-\rho+\alpha}[w(Q)]^{1/p}}\Bigg)
\bigg(\int_{\frac{|x-x_0|}{2}}^\infty\frac{dt}{t^{2\rho+1}}\bigg)^{1/2}\notag\\
&\le C\Bigg(\frac{r^{n+1}}{|x-x_0|^{n+1}[w(Q)]^{1/p}}+
\frac{r^{n+\alpha}}{|x-x_0|^{n+\alpha}[w(Q)]^{1/p}}\Bigg).
\end{align}
Hence
\begin{equation*}
\begin{split}
I_2&\le C\cdot\frac{r^{(n+\alpha)p}}{w(Q)}\int_{(Q^*)^c}\frac{w(x)}{|x-x_0|^{(n+\alpha)p}}\,dx
+C\cdot\frac{r^{(n+1)p}}{w(Q)}\int_{(Q^*)^c}\frac{w(x)}{|x-x_0|^{(n+1)p}}\,dx\\
&=\mbox{\upshape III+IV}.
\end{split}
\end{equation*}
Recall that $q=p(1+\frac{\alpha}{n})$ and $w\in A_q$. By using Lemma 2.1 and Lemma 2.2, we thus obtain
\begin{equation*}
\begin{split}
\mbox{\upshape III}&=C\cdot\frac{r^{(n+\alpha)p}}{w(Q)}\int_{|y|\ge\sqrt n r}\frac{w_1(y)}{|y|^{nq}}\,dy\\
&\le C\cdot\frac{r^{(n+\alpha)p}}{w(Q)}\cdot r^{-nq}w_1\big(Q(0,r)\big)\\
&=C\cdot\frac{r^{(n+\alpha)p}}{w(Q)}\cdot r^{-nq}w(Q)\\
&\le C,
\end{split}
\end{equation*}
where $w_1(x)=w(x+x_0)$ is the translation of $w(x)$. It is obvious that $w_1\in A_q$ for $w\in A_q$, $q>1$, and $q_{w_1}=q_w$. Since $w\in A_{p(1+\frac{\alpha}{n})}$ and $0<\alpha\le1$, then we have $w\in A_{p(1+\frac{1}{n})}$. Therefore, by using the same arguments as above, we can also prove that
\begin{equation*}
\mbox{\upshape IV}\le C.
\end{equation*}
Summing up the above estimates for $I_1$ and $I_2$, we then complete the proof of Theorem 1.1.
\end{proof}

\section{Proof of Theorem 1.2}

In order to prove our main result of this section, we shall need the following superposition principle on the weighted weak type estimates.

\begin{lemma}
Let $w\in A_1$ and $0<p<1$.
If a sequence of measurable functions $\{f_j\}$ satisfy
\begin{equation*}
w\big(\big\{x\in\mathbb R^n:|f_j(x)|>\alpha\big\}\big)\le \alpha^{-p} \quad\mbox{for all}\;\, j\in\mathbb Z
\end{equation*}
and
\begin{equation*}
\sum_{j\in\mathbb Z}|\lambda_j|^p\le1,
\end{equation*}
then we obtain that $\sum_j\lambda_jf_j(x)$ is absolutely convergent almost everywhere and
\begin{equation*}
w\Big(\Big\{x\in\mathbb R^n:\Big|\sum_j\lambda_jf_j(x)\Big|>\alpha\Big\}\Big)\le\frac{2-p}{1-p}\cdot\alpha^{-p}.
\end{equation*}
\end{lemma}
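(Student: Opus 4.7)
The plan is to fix a threshold $\alpha>0$ and decompose each summand $\lambda_j f_j$ at height $\alpha$ into a ``large'' part $g_j$ (which lives on a set of small $w$-measure) and a ``small'' part $h_j$ (which lies in $L^1_w$), then combine a union bound on the $g_j$'s with Chebyshev applied to $\sum_j h_j$.

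More concretely, for each $j$ set
\begin{equation*}
h_j(x)=(\lambda_j f_j)(x)\,\chi_{\{|\lambda_j f_j|\le\alpha\}}(x),\qquad g_j(x)=(\lambda_j f_j)(x)\,\chi_{\{|\lambda_j f_j|>\alpha\}}(x),
\end{equation*}
and let $U=\bigcup_j\{x:|\lambda_j f_j(x)|>\alpha\}$. Using the hypothesis $w(\{|f_j|>t\})\le t^{-p}$ together with $\sum_j|\lambda_j|^p\le1$, a direct union bound yields
\begin{equation*}
w(U)\le\sum_j w\bigl(\{|f_j|>\alpha/|\lambda_j|\}\bigr)\le\sum_j|\lambda_j|^p\alpha^{-p}\le\alpha^{-p}.
\end{equation*}
On the other hand, the layer-cake representation combined with $|h_j|\le\alpha$ gives
\begin{equation*}
\int_{\mathbb R^n}|h_j(x)|w(x)\,dx=\int_0^\alpha w(\{|h_j|>t\})\,dt\le|\lambda_j|^p\int_0^\alpha t^{-p}\,dt=\frac{|\lambda_j|^p\alpha^{1-p}}{1-p},
\end{equation*}
so summing in $j$ and using $\sum_j|\lambda_j|^p\le1$, we obtain $\bigl\|\sum_j|h_j|\bigr\|_{L^1_w}\le\alpha^{1-p}/(1-p)$. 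Since $w$ is positive a.e.\ (as $w\in A_1\subseteq A_\infty$), this forces $\sum_j|h_j(x)|<\infty$ for a.e.\ $x$.

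To deduce the absolute a.e.\ convergence of $\sum_j\lambda_j f_j$, observe that on $U^c$ one has $\lambda_j f_j=h_j$ for every $j$, so $\sum_j|\lambda_j f_j(x)|=\sum_j|h_j(x)|<\infty$ for a.e.\ $x\in U^c$. Applying the above with $\alpha=2^m$ and letting $m\to\infty$ kills the exceptional set $U=U_m$, since $w(U_m)\le 2^{-mp}\to0$. Finally, Chebyshev's inequality applied to $\sum_jh_j$ yields
\begin{equation*}
w\Bigl(\Bigl\{\bigl|\sum_jh_j\bigr|>\alpha\Bigr\}\Bigr)\le\frac{1}{\alpha}\int_{\mathbb R^n}\sum_j|h_j|w\,dx\le\frac{\alpha^{-p}}{1-p},
\end{equation*}
and combining this with $\{|\sum_j\lambda_j f_j|>\alpha\}\subseteq U\cup\{|\sum_jh_j|>\alpha\}$ produces the desired bound
\begin{equation*}
w\Bigl(\Bigl\{\bigl|\sum_j\lambda_j f_j\bigr|>\alpha\Bigr\}\Bigr)\le\alpha^{-p}+\frac{\alpha^{-p}}{1-p}=\frac{2-p}{1-p}\cdot\alpha^{-p}.
\end{equation*}

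The only subtlety is choosing the truncation level; taking it equal to $\alpha$ itself (rather than something like $\alpha/2$) is precisely what yields the sharp constant $(2-p)/(1-p)$. The rest is layer-cake bookkeeping plus Chebyshev, and the hypothesis $w\in A_1$ is used only through the fact that $w>0$ a.e.\ (to pass from $\|\sum_j|h_j|\|_{L^1_w}<\infty$ to pointwise a.e.\ finiteness).
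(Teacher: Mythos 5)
Your proof is correct, and it is essentially the canonical argument: the paper itself gives no proof of Lemma 4.1, deferring instead to the unweighted case in Stein--Taibleson--Weiss and to Lu's book, and the proof given there is exactly your truncation-at-height-$\alpha$ scheme (union bound on the large parts, layer-cake plus Chebyshev on the truncated parts), carried over verbatim to the measure $w\,dx$. Your closing remark is also accurate: the argument uses only countable subadditivity of $w\,dx$ and the a.e.\ positivity of $w$, so the $A_1$ hypothesis is never really invoked.
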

\begin{proof}
The proof of this lemma is similar to the corresponding result for the unweighted case which can be found in \cite{stein2}. See also [24, p. 123].
\end{proof}

We are now ready to give the proof of Theorem 1.2.

\begin{proof}[Proof of Theorem 1.2]
We first observe that for $w\in A_1$ and $p=n/{(n+\alpha)}$, then $[n({q_w}/p-1)]=[\alpha]=0$. According to Theorem 2.3 and Lemma 4.1, it is enough for us to show that for any $w$-$(p,q,0)$-atom $a(x)$, there exists a constant $C>0$ independent of $a$ such that $\big\|\mu^{\rho}_{\Omega}(a)\big\|_{WL^p_w}\le C$. Let $a(x)$ be a $w$-$(p,q,0)$-atom centered at $x_0$ with supp\,$a\subseteq Q=Q(x_0,r)$, and let $Q^*=2\sqrt nQ$. Then for any fixed $\lambda>0$, we write
\begin{equation*}
\begin{split}
&\lambda^p\cdot w\big(\big\{x\in\mathbb R^n:\big|\mu^{\rho}_\Omega(a)(x)\big|>\lambda\big\}\big)\\
\le\,&\lambda^p\cdot w\big(\big\{x\in Q^*:\big|\mu^{\rho}_\Omega(a)(x)\big|>\lambda\big\}\big)
+\lambda^p\cdot w\big(\big\{x\in(Q^*)^c:\big|\mu^{\rho}_\Omega(a)(x)\big|>\lambda\big\}\big)\\
=\,&J_1+J_2.
\end{split}
\end{equation*}
Since $w\in A_1$, then $w\in A_q$ for any $1<q<\infty$. Applying Chebyshev's inequality, H\"older's inequality, Lemma 2.1, Theorem A and the size condition of atom $a$, we have
\begin{equation*}
\begin{split}
J_1&\le\int_{Q^*}\big|\mu^{\rho}_\Omega(a)(x)\big|^pw(x)\,dx\\
&\le\bigg(\int_{Q^*}\big|\mu^{\rho}_\Omega(a)(x)\big|^qw(x)\,dx\bigg)^{p/q}
\bigg(\int_{Q^*}w(x)\,dx\bigg)^{1-p/q}\\
&\le C\cdot\big\|\mu^{\rho}_\Omega(a)\big\|^p_{L^q_w}\big[w(Q)\big]^{1-p/q}\\
&\le C\cdot\|a\|^p_{L^q_w}\big[w(Q)\big]^{1-p/q}\\
&\le C.
\end{split}
\end{equation*}
For any $x\in(Q^*)^c$, in the proof of Theorem 1.1, we have already obtained the following pointwise inequality (see (3.7))
\begin{equation*}
\big|\mu^{\rho}_\Omega(a)(x)\big|\le C\Bigg(\frac{r^{n+\alpha}}{|x-x_0|^{n+\alpha}[w(Q)]^{1/p}}+
\frac{r^{n+1}}{|x-x_0|^{n+1}[w(Q)]^{1/p}}\Bigg).
\end{equation*}
Setting
\begin{equation*}
G(x)=\frac{r^{n+\alpha}}{|x-x_0|^{n+\alpha}[w(Q)]^{1/p}}\quad\mbox{and}\quad H(x)=\frac{r^{n+1}}{|x-x_0|^{n+1}[w(Q)]^{1/p}}.
\end{equation*}
Thus, in order to complete the proof of Theorem 1.2, we only need to prove that the following two inequalities hold.
\begin{equation}
\lambda^p\cdot w\big(\big\{x\in(Q^*)^c:\big|G(x)\big|>\lambda\big\}\big)\le C
\end{equation}
and
\begin{equation}
\lambda^p\cdot w\big(\big\{x\in(Q^*)^c:\big|H(x)\big|>\lambda\big\}\big)\le C.
\end{equation}
Let us start with the inequality (4.1). For any given $\lambda>0$, we are going to consider two cases. If
\begin{equation*}
\lambda\ge\frac{r^{n+\alpha}}{{(\sqrt nr)}^{n+\alpha}[w(Q)]^{1/p}},
\end{equation*}
then for any $x\in(Q^*)^c$, we have $|x-x_0|\ge\sqrt nr$. Hence, we can easily verify that
\begin{equation*}
\big\{x\in(Q^*)^c:\big|G(x)\big|>\lambda\big\}=\O.
\end{equation*}
Therefore, in this case, the inequality
\begin{equation*}
\lambda^p\cdot w\big(\big\{x\in(Q^*)^c:\big|G(x)\big|>\lambda\big\}\big)\le C
\end{equation*}
holds trivially. Now suppose that
\begin{equation*}
\lambda<\frac{r^{n+\alpha}}{{(\sqrt nr)}^{n+\alpha}[w(Q)]^{1/p}}.
\end{equation*}
If we take $R=\frac{r}{\lambda^{p/n}[w(Q)]^{1/n}}$, then it is not difficult to check that $R\ge\sqrt nr\ge r$ and
\begin{equation}
\big\{x\in(Q^*)^c:\big|G(x)\big|>\lambda\big\}\subseteq\big\{x\in\mathbb R^n:|x-x_0|<R\big\}\subseteq Q(x_0,2R).
\end{equation}
Since $w\in A_1$, then by Lemma 2.3, we can get (below, $\widetilde C$ is an absolute constant)
\begin{equation*}
\widetilde C \cdot\frac{|Q(x_0,r)|}{|Q(x_0,R)|}\le\frac{w(Q(x_0,r))}{w(Q(x_0,R))},
\end{equation*}
which implies
\begin{equation*}
\begin{split}
w\big(Q(x_0,R)\big)&\le\frac{R^n\cdot w(Q)}{\widetilde C\cdot r^n}\\
&\le\frac{1}{\widetilde C\cdot\lambda^p}.
\end{split}
\end{equation*}
Hence, it follows directly from (4.3) and Lemma 2.1 that
\begin{equation*}
\begin{split}
\lambda^p\cdot w\big(\big\{x\in(Q^*)^c:\big|G(x)\big|>\lambda\big\}\big)
&\le\lambda^p\cdot w\big(Q(x_0,2R)\big)\\
&\le C\cdot\lambda^p\cdot w\big(Q(x_0,R)\big)\\
&\le C\cdot{\widetilde C}^{-1}.
\end{split}
\end{equation*}
It remains to prove the inequality (4.2). Similarly, for any given $\lambda>0$, we will consider the following two cases. If
\begin{equation*}
\lambda\ge\frac{r^{n+1}}{{(\sqrt nr)}^{n+1}[w(Q)]^{1/p}},
\end{equation*}
then as before, we can also show that
\begin{equation*}
\big\{x\in(Q^*)^c:\big|H(x)\big|>\lambda\big\}=\O,
\end{equation*}
and so the following estimate holds trivially.
\begin{equation*}
\lambda^p\cdot w\big(\big\{x\in(Q^*)^c:\big|H(x)\big|>\lambda\big\}\big)\le C.
\end{equation*}
Now if instead we assume that
\begin{equation*}
\lambda<\frac{r^{n+1}}{{(\sqrt nr)}^{n+1}[w(Q)]^{1/p}}.
\end{equation*}
In this case, if we take $R'=\frac{r}{\lambda^{1/{(n+1)}}[w(Q)]^{1/{p(n+1)}}}$, then it is not difficult to verify that $R'\ge\sqrt nr\ge r$ and
\begin{equation}
\big\{x\in(Q^*)^c:\big|H(x)\big|>\lambda\big\}\subseteq\big\{x\in\mathbb R^n:|x-x_0|<R'\big\}\subseteq Q(x_0,2R').
\end{equation}
Recall that $p=n/{(n+\alpha)}$ and $0<\alpha<1$, then $1<p(1+\frac{1}{n})$. Since $w\in A_1$, then $w\in A_{p(1+\frac{1}{n})}$. Furthermore, by using Lemma 2.3 again, we can get (below, $\widetilde{\widetilde{C}}$ is an absolute constant)
\begin{equation*}
\widetilde{\widetilde{C}}\cdot\left(\frac{|Q(x_0,r)|}{|Q(x_0,R')|}\right)^{p(1+\frac{1}{n})}
\le\frac{w(Q(x_0,r))}{w(Q(x_0,R'))},
\end{equation*}
which in turn gives
\begin{equation*}
\begin{split}
w\big(Q(x_0,R')\big)&\le\frac{(R')^{p(n+1)}\cdot w(Q)}{\widetilde{\widetilde{C}}\cdot r^{p(n+1)}}\\
&\le\frac{1}{\widetilde{\widetilde{C}}\cdot\lambda^p}.
\end{split}
\end{equation*}
Therefore, by (4.4) and Lemma 2.1, we obtain
\begin{equation*}
\begin{split}
\lambda^p\cdot w\big(\big\{x\in(Q^*)^c:\big|H(x)\big|>\lambda\big\}\big)
&\le\lambda^p\cdot w\big(Q(x_0,2R')\big)\\
&\le C\cdot\lambda^p\cdot w\big(Q(x_0,R')\big)\\
&\le C\cdot{\widetilde{\widetilde C}}^{-1}.
\end{split}
\end{equation*}
Collecting all these estimates and then taking the supremum over all $\lambda>0$, we conclude the proof of Theorem 1.2.
\end{proof}

\section{Proof of Theorem 1.3}

\begin{proof}[Proof of Theorem 1.3]
The basic idea of the proof is taken from \cite{quek}. For any given $\lambda>0$, we may choose $k_0\in\mathbb Z$ such that $2^{k_0}\le\lambda<2^{k_0+1}$. For every $f\in WH^p_w(\mathbb R^n)$, then by Theorem 2.5, we can write
\begin{equation*}
f=\sum_{k=-\infty}^\infty f_k=\sum_{k=-\infty}^{k_0} f_k+\sum_{k=k_0+1}^\infty f_k:=F_1+F_2,
\end{equation*}
where $F_1=\sum_{k=-\infty}^{k_0} f_k=\sum_{k=-\infty}^{k_0}\sum_i b^k_i$, $F_2=\sum_{k=k_0+1}^\infty f_k=\sum_{k=k_0+1}^\infty\sum_i b^k_i$ and $\{b^k_i\}$ satisfies $(a)$--$(c)$ in Theorem 2.5. Then we have \begin{equation*}
\begin{split}
&\lambda^p\cdot w\big(\big\{x\in\mathbb R^n:\big|\mu^{\rho}_{\Omega}(f)(x)\big|>\lambda\big\}\big)\\
\le\,&\lambda^p\cdot w\big(\big\{x\in\mathbb R^n:\big|\mu^{\rho}_{\Omega}(F_1)(x)\big|>\lambda/2\big\}\big)+\lambda^p\cdot w\big(\big\{x\in\mathbb R^n:\big|\mu^{\rho}_{\Omega}(F_2)(x)\big|>\lambda/2\big\}\big)\\
=\,&K_1+K_2.
\end{split}
\end{equation*}
First we claim that the following inequality holds:
\begin{equation}
\big\|F_1\big\|_{L^2_w}\le C\cdot\lambda^{1-p/2}\big\|f\big\|^{p/2}_{WH^p_w}.
\end{equation}
In fact, since supp\,$b^k_i\subseteq Q^k_i=Q\big(x^k_i,r^k_i\big)$ and $\big\|b^k_i\big\|_{L^\infty}\le C 2^k$ by Theorem 2.5, then it follows from Minkowski's integral inequality that
\begin{equation*}
\begin{split}
\big\|F_1\big\|_{L^2_w}&\le\sum_{k=-\infty}^{k_0}\sum_i\big\|b^k_i\big\|_{L^2_w}\\
&\le\sum_{k=-\infty}^{k_0}\sum_i\big\|b^k_i\big\|_{L^\infty}w\big(Q^k_i\big)^{1/2}.
\end{split}
\end{equation*}
For each $k\in\mathbb Z$, by using the bounded overlapping property of the cubes $\{Q^k_i\}$ and the fact that $1-p/2>0$, we thus obtain
\begin{equation*}
\begin{split}
\big\|F_1\big\|_{L^2_w}&\le C\sum_{k=-\infty}^{k_0}2^k\Big(\sum_i w\big(Q^k_i\big)\Big)^{1/2}\\
&\le C\sum_{k=-\infty}^{k_0}2^{k(1-p/2)}\big\|f\big\|^{p/2}_{WH^p_w}\\
&\le C\sum_{k=-\infty}^{k_0}2^{(k-k_0)(1-p/2)}\cdot\lambda^{1-p/2}\big\|f\big\|^{p/2}_{WH^p_w}\\
&\le C\cdot\lambda^{1-p/2}\big\|f\big\|^{p/2}_{WH^p_w}.
\end{split}
\end{equation*}
By the hypothesis $w\in A_{p(1+\frac{\alpha}{n})}$ and $1<p(1+\frac{\alpha}{n})\le1+\frac{\alpha}{n}\le2$, then we have $w\in A_2$. Applying Chebyshev's inequality, Theorem A and the inequality (5.1), we can deduce that
\begin{equation*}
\begin{split}
K_1&\le \lambda^p\cdot\frac{4}{\lambda^2}\big\|\mu^{\rho}_{\Omega}(F_1)\big\|^2_{L^2_w}\\
&\le C\cdot\lambda^{p-2}\big\|F_1\big\|^2_{L^2_w}\\
&\le C\big\|f\big\|^{p}_{WH^p_w}.
\end{split}
\end{equation*}
Now we turn our attention to the estimate of $K_2$. We set
\begin{equation*}
A_{k_0}=\bigcup_{k=k_0+1}^\infty\bigcup_i \widetilde{Q^k_i},
\end{equation*}
where $\widetilde{Q^k_i}=Q\big(x^k_i,\tau^{{(k-k_0)}/{(n+\alpha)}}(2\sqrt n)r^k_i\big)$ and $\tau$ is a fixed positive number such that $1<\tau<2$. Thus, we can further decompose $K_2$ as
\begin{equation*}
\begin{split}
K_2&\le\lambda^p\cdot w\big(\big\{x\in A_{k_0}:\big|\mu^{\rho}_{\Omega}(F_2)(x)\big|>\lambda/2\big\}\big)+
\lambda^p\cdot w\big(\big\{x\in (A_{k_0})^c:\big|\mu^{\rho}_{\Omega}(F_2)(x)\big|>\lambda/2\big\}\big)\\
&=K'_2+K''_2.
\end{split}
\end{equation*}
Let us first deal with the term $K'_2$. Since $w\in A_{p(1+\frac{\alpha}{n})}$, then by Lemma 2.1, we can get
\begin{equation*}
\begin{split}
K'_2&\le\lambda^p\sum_{k=k_0+1}^\infty\sum_i w\Big(\widetilde{Q^k_i}\Big)\\
&\le C\cdot\lambda^p\sum_{k=k_0+1}^\infty\tau^{(k-k_0)p}\sum_i w\big(Q^k_i\big)\\
&\le C\big\|f\big\|^{p}_{WH^p_w}\sum_{k=k_0+1}^\infty\Big(\frac{\tau}{2}\Big)^{(k-k_0)p}\\
&\le C\big\|f\big\|^{p}_{WH^p_w}.
\end{split}
\end{equation*}
On the other hand, an application of Chebyshev's inequality leads to that
\begin{equation*}
\begin{split}
K''_2&\le 2^p\int_{(A_{k_0})^c}\big|\mu^{\rho}_{\Omega}(F_2)(x)\big|^pw(x)\,dx\\
&\le 2^p\sum_{k=k_0+1}^\infty\sum_i
\int_{\big(\widetilde{Q^k_i}\big)^c}\big|\mu^{\rho}_{\Omega}\big(b^k_i\big)(x)\big|^pw(x)\,dx.
\end{split}
\end{equation*}
As before, if we set $\psi^{\rho}(x)=\Omega(x)|x|^{-n+\rho}\chi_{\{|x|\le1\}}(x)$, then
\begin{equation}
\mu^{\rho}_\Omega\big(b^k_i\big)(x)=
\left(\int_0^\infty\big|\big(\psi^{\rho}_t*b^k_i\big)(x)\big|^2\frac{dt}{t}\right)^{1/2}.
\end{equation}
Let $q=p(1+\frac{\alpha}{n})$ for simplicity. Then for any $n/{(n+\alpha)}<p\le1$ and $w\in A_q$ with $q>1$, we can easily see that $[n(q_w/p-1)]=0$. Hence, for any $x\in\big(\widetilde{Q^k_i}\big)^c$, by the vanishing moment condition of $b^k_i$, we have
\begin{equation*}
\begin{split}
\big|\big(\psi^{\rho}_t*b^k_i\big)(x)\big|=\,&\frac{1}{t^{\rho}}\cdot
\left|\int_{Q^k_i}\bigg[\frac{\Omega(x-y)}{|x-y|^{n-\rho}}-\frac{\Omega(x-x^k_i)}{|x-x^k_i|^{n-\rho}}\bigg]
b^k_i(y)\,dy\right|\\
\le\,&\frac{1}{t^{\rho}}\cdot\int_{Q^k_i}
\bigg|\frac{1}{|x-y|^{n-\rho}}-\frac{1}{|x-x^k_i|^{n-\rho}}\bigg|\big|\Omega(x-y)b^k_i(y)\big|\,dy\\
&+\frac{1}{t^{\rho}}\cdot\int_{Q^k_i}\frac{|\Omega(x-y)-\Omega(x-x^k_i)|}{|x-x^k_i|^{n-\rho}}\big|b^k_i(y)\big|\,dy\\
=\,&\mbox{\upshape I+II}.
\end{split}
\end{equation*}
Since $\Omega\in Lip_\alpha(S^{n-1})$, $0<\alpha\le1$, then $\Omega\in L^\infty(S^{n-1})$. Note that for any $y\in Q^k_i$ and $x\in\big(\widetilde{Q^k_i}\big)^c$, then $|x-y|\sim |x-x^k_i|$. This fact together with the mean value theorem yields
\begin{align}
\mbox{\upshape I}&\le\frac{1}{t^{\rho}}\cdot\|\Omega\|_{L^\infty}\int_{Q^k_i}
\bigg|\frac{1}{|x-y|^{n-\rho}}-\frac{1}{|x-x^k_i|^{n-\rho}}\bigg|\big|b^k_i(y)\big|\,dy\notag\\
&\le C\cdot\big\|b^k_i\big\|_{L^\infty}\frac{(r^k_i)^{n+1}}{t^{\rho}|x-x^k_i|^{n-\rho+1}}.
\end{align}
For the term II, we still have $|x-y|\sim |x-x^k_i|$, when $y\in Q^k_i$ and $x\in\big(\widetilde{Q^k_i}\big)^c$. From the definition of $\Omega\in Lip_\alpha(S^{n-1})$, we can easily see that
\begin{align}
\big|\Omega(x-y)-\Omega(x-x^k_i)\big|&\le\left|\Omega\Big(\frac{x-y}{|x-y|}\Big)
-\Omega\Big(\frac{x-x^k_i}{|x-x^k_i|}\Big)\right|\notag\\
&\le C\left|\frac{x-y}{|x-y|}-\frac{x-x^k_i}{|x-x^k_i|}\right|^\alpha\notag\\
&\le C\left(\frac{|y-x^k_i|}{|x-x^k_i|}\right)^\alpha.
\end{align}
Substituting the above inequality (5.4) into the term II, then we get
\begin{equation}
\mbox{\upshape II}\le C\cdot\big\|b^k_i\big\|_{L^\infty}\frac{(r^k_i)^{n+\alpha}}{t^{\rho}|x-x^k_i|^{n-\rho+\alpha}}.
\end{equation}
When $y\in Q^k_i$ and $x\in\big(\widetilde{Q^k_i}\big)^c$, then a direct calculation shows that
\begin{equation}
t\ge|x-y|\ge\big|x-x^k_i\big|-\big|y-x^k_i\big|\ge\frac{|x-x^k_i|}{2}.
\end{equation}
Summarizing the above two estimates for I and II, for any $x\in \big(\widetilde{Q^k_i}\big)^c$, we have
\begin{equation*}
\begin{split}
\big|\mu^{\rho}_\Omega\big(b^k_i\big)(x)\big|&\le C\cdot\big\|b^k_i\big\|_{L^\infty}
\bigg[\frac{(r^k_i)^{n+1}}{|x-x^k_i|^{n-\rho+1}}+\frac{(r^k_i)^{n+\alpha}}{|x-x^k_i|^{n-\rho+\alpha}}\bigg]
\bigg(\int_{\frac{|x-x^k_i|}{2}}^\infty\frac{dt}{t^{2\rho+1}}\bigg)^{1/2}\\
&\le C\cdot \big\|b^k_i\big\|_{L^\infty}
\bigg[\frac{(r^k_i)^{n+1}}{|x-x^k_i|^{n+1}}+\frac{(r^k_i)^{n+\alpha}}{|x-x^k_i|^{n+\alpha}}\bigg].
\end{split}
\end{equation*}
Note that $\big\|b^k_i\big\|_{L^\infty}\le C2^k$. Hence
\begin{equation*}
\begin{split}
K''_2\le\,& C\sum_{k=k_0+1}^\infty\sum_i2^{kp}\big(r^k_i\big)^{(n+\alpha)p}
\int_{|x-x^k_i|\ge\tau^{{(k-k_0)}/{(n+\alpha)}}\sqrt n r^k_i}\frac{w(x)}{|x-x^k_i|^{(n+\alpha)p}}\,dx\\
&+C\sum_{k=k_0+1}^\infty\sum_i2^{kp}\big(r^k_i\big)^{(n+1)p}
\int_{|x-x^k_i|\ge\tau^{{(k-k_0)}/{(n+\alpha)}}\sqrt n r^k_i}\frac{w(x)}{|x-x^k_i|^{(n+1)p}}\,dx\\
=\,&\mbox{\upshape III+IV}.
\end{split}
\end{equation*}
Let us consider the term III. Recall that $q=p(1+\frac{\alpha}{n})$, then we can deduce
\begin{equation*}
\begin{split}
\mbox{\upshape III}&= C\sum_{k=k_0+1}^\infty\sum_i2^{kp}\big(r^k_i\big)^{nq}
\int_{|y|\ge\tau^{{(k-k_0)}/{(n+\alpha)}}\sqrt n r^k_i}\frac{w^k_i(y)}{|y|^{nq}}\,dy\\
&\le C\sum_{k=k_0+1}^\infty\sum_i
2^{kp}\Big(\tau^{{(k-k_0)}/{(n+\alpha)}}\Big)^{-nq}w^k_i\Big(Q\big(0,\tau^{{(k-k_0)}/{(n+\alpha)}}\cdot r^k_i\big)\Big)\\
&= C\sum_{k=k_0+1}^\infty\sum_i
2^{kp}\Big(\tau^{{(k-k_0)}/{(n+\alpha)}}\Big)^{-nq}w\Big(Q\big(x^k_i,\tau^{{(k-k_0)}/{(n+\alpha)}}\cdot r^k_i\big)\Big),
\end{split}
\end{equation*}
where $w^k_i(x)=w(x+x^k_i)$ is the translation of $w(x)$. It is obvious that $w^k_i\in A_{q}$ whenever $w\in A_{q}$, and $q_{w^k_i}=q_w$. In addition, for $w\in A_q$ with $q>1$, then we can take a sufficiently small number $\varepsilon>0$ such that $q-\varepsilon\ge1$ and $w\in A_{q-\varepsilon}$. Therefore, by using Lemma 2.1 again, we eventually obtain
\begin{equation*}
\begin{split}
\mbox{\upshape III}&\le C\sum_{k=k_0+1}^\infty\sum_i2^{kp}\Big(\tau^{{(k-k_0)}/{(n+\alpha)}}\Big)^{-n\varepsilon}w\big(Q^k_i\big)\\
&\le C\big\|f\big\|^{p}_{WH^p_w}\sum_{k=k_0+1}^\infty\Big(\tau^{{(k-k_0)}/{(n+\alpha)}}\Big)^{-n\varepsilon}\\
&\le C\big\|f\big\|^{p}_{WH^p_w}.
\end{split}
\end{equation*}
For the last term IV, since $w\in A_{p(1+\frac{\alpha}{n})}$ and $0<\alpha\le1$, then we have $w\in A_{p(1+\frac{1}{n})}$. Thus, by using the same arguments as above, we can also prove
\begin{equation*}
\mbox{\upshape IV}\le C\big\|f\big\|^{p}_{WH^p_w}.
\end{equation*}
Combining the above estimates for $K_1$ and $K_2$, and then taking the supremum over all $\lambda>0$, we conclude the proof of Theorem 1.3.
\end{proof}

We finally remark that for any function $f$, a straightforward computation shows that the grand maximal function of $f$ is pointwise dominated by $M(f)$, where $M$ denotes the standard Hardy-Littlewood maximal operator. Hence, by the weighted weak (1,1) estimate of $M$, it is easy to see that the space $L^1_w$ is continuously embedded as a subspace of $WH^1_w$ whenever $w\in A_1$, and we have $\|f\|_{WH^1_w}\le C\|f\|_{L^1_w}$ provided that $w\in A_1$. As a direct consequence of Theorem 1.3, we immediately obtain the following result.

\newtheorem{corollary}[theorem]{Corollary}
\begin{corollary}
Let $0<\rho<n$, $0<\alpha\le1$ and $\Omega\in Lip_\alpha(S^{n-1})$. If $p=1$ and $w\in A_1$, then there exists a
constant $C>0$ independent of $f$ such that
\begin{equation*}
\big\|\mu^{\rho}_{\Omega}(f)\big\|_{WL^1_w}\le C\|f\|_{L^1_w}.
\end{equation*}
\end{corollary}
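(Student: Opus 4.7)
The plan is to derive Corollary 1 as an almost immediate consequence of Theorem 1.3, combined with the continuous embedding $L^1_w\hookrightarrow WH^1_w$ that is sketched in the paragraph preceding the corollary. The only ingredients needed beyond quoting Theorem 1.3 are (i) verifying that the hypothesis $w\in A_{p(1+\alpha/n)}$ reduces to $w\in A_1$ when $p=1$, and (ii) justifying the embedding claim via a pointwise comparison of the grand maximal function against the Hardy--Littlewood maximal operator.

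First I would observe that with $p=1$, the weight class appearing in Theorem 1.3 is $A_{1+\alpha/n}$. Since $1+\alpha/n>1$ and $A_1\subset A_r$ for every $r>1$, the assumption $w\in A_1$ automatically gives $w\in A_{1+\alpha/n}$. Hence Theorem 1.3 applies and yields
\begin{equation*}
\big\|\mu^{\rho}_{\Omega}(f)\big\|_{WL^1_w}\le C\|f\|_{WH^1_w}.
\end{equation*}

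Next I would justify $\|f\|_{WH^1_w}\le C\|f\|_{L^1_w}$ for $w\in A_1$. For any $\varphi\in\mathscr A_{N,w}$, a standard argument using the decay $|\varphi(x)|\le(1+|x|)^{-N-n-1}$ shows that $\sup_{|y-x|<t}|(\varphi_t*f)(y)|$ is pointwise bounded, up to a constant depending only on $n$, by the Hardy--Littlewood maximal function $Mf(x)$; taking the supremum over $\varphi$ gives $G_wf(x)\le C\,Mf(x)$. The classical Muckenhoupt weak-type result states that $M$ is of weak type $(1,1)$ with respect to $w\,dx$ whenever $w\in A_1$, so
\begin{equation*}
\|f\|_{WH^1_w}=\|G_wf\|_{WL^1_w}\le C\|Mf\|_{WL^1_w}\le C\|f\|_{L^1_w}.
\end{equation*}
Chaining these two inequalities yields the corollary.

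There is essentially no obstacle here: the corollary is a direct packaging of Theorem 1.3 at the endpoint $p=1$, together with a well-known embedding. The only place one has to be a little careful is the pointwise domination $G_wf\lesssim Mf$, which relies on the normalization built into the class $\mathscr A_{N,w}$ defined in Section~2; but this is a routine computation and does not introduce any new difficulty. Consequently the proof of Corollary 1 can be presented in just a few lines.
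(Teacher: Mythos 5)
Your proposal is correct and follows essentially the same route as the paper: the paper's own justification is precisely the pointwise domination $G_wf\lesssim Mf$, the weighted weak $(1,1)$ bound for the Hardy--Littlewood maximal operator under $w\in A_1$ (giving the embedding $L^1_w\hookrightarrow WH^1_w$), and then an application of Theorem 1.3 at $p=1$, where $A_1\subset A_{1+\alpha/n}$. The only difference is that you spell out the weight-class inclusion explicitly, which the paper leaves implicit.
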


\end{document}